\newcommand{\IE}{\mathds{E}}
\newcommand{\IN}{\mathds{N}}
\newcommand{\IP}{\mathds{P}}
\newcommand{\IR}{\mathds{R}}
\newcommand{\Sp}{\mathds{S}}
\newcommand{\diam}{\mathrm{diam}}
\DeclareMathOperator{\bey}{bey}
\DeclareMathOperator{\Int}{int}
\DeclareMathOperator{\relint}{relint}
\DeclareMathOperator{\Proj}{proj}
\def\E{{\mathds E}}
\def\1{{\mathds 1}}
\def\N{{\mathds N}}
\def\P{{\mathds{P}}}
\def\V{{\mathds V}}
\def\R{{\mathds{R}}}
\def\cB{{\mathcal B}}
\def\cC{{\mathcal C}}
\def\cF{{\mathcal F}}
\def\cH{{\mathcal H}}
\def\cN{{\mathcal N}}
\def\cT{{\mathcal T}}
\def\cX{{\mathcal X}}
\def\cY{{\mathcal Y}}
\def\e{\varepsilon}
\def\k{{\kappa}}
\def\r{{\rho}}
\def\bd{{\partial}}
\newcommand{\dd}{\mathrm{d}}
\newtheorem{theorem}{Theorem}[section]
\newtheorem{lemma}[theorem]{Lemma}
\newtheorem{proposition}[theorem]{Proposition}
\theoremstyle{definition}
\newtheorem{remark}[theorem]{Remark}
\begin{document}

\title{Central limit theorems for random boundary polytopes}
\author{
Matthias Reitzner\thanks{Institut f\"ur Mathematik, Universit\"at Osnabr\"uck, Germany; matthias.reitzner@uni-osnabrueck.de} and
Mathias Sonnleitner\thanks{Institut f\"ur Finanzmathematik und Angewandte Zahlentheorie, Johannes Kepler Universit\"at Linz, Austria; mathias.sonnleitner@jku.at}}
\date{}

\maketitle

\begin{abstract}
	The number of faces of the convex hull of $n$ independent and identically distributed random points chosen on the boundary of a smooth convex body in $ \R^d$ is investigated. In dimensions two and three the number of $k$-faces is known to be constant almost surely and in dimension four and higher the variance is known to be non-zero if $k\ge 1$. We show that it is of order $n$. This is complemented by a central limit theorem with a Berry-Esseen bound which is of optimal order $n^{-1/2}$.  We derive similar results for the Poissonized model, where additionally the number of random points is Poisson distributed. As a main tool, we develop a representation of the number of faces as a sum of exponentially stabilizing score functions. 
	\\

	\noindent{\bf Keywords}. random polytope, random approximation, stochastic geometry, variance expansion, Berry-Esseen bound, central limit theorem\\
	{\bf MSC 2020}. Primary: 52A22; Secondary: 60D05.
\end{abstract}

\maketitle

\section{Introduction and main results}

Let $K$ be a smooth convex body in $\R^d$. Choose $n$ independent and identically distributed random points $X_1, \dots, X_n$ on the boundary $\bd K$ of $K$, and denote by $P_n= [X_1, \dots, X_n]$ the convex hull of these points. We are interested in the expectation of the number of $k$-faces $\E f_k (P_n)$ of $P_n$, their variance $\V f_k(P_n)$ and their fluctuations, where $k =1, \dots, d-1$. Since explicit results for fixed $n$ cannot be expected, we investigate the asymptotics as $n \to \infty$.

There is a vast amount of literature on random polytopes with vertices chosen from the \emph{interior} of a convex body, i.e. a compact convex set with non-empty interior. Investigations started with two classical articles by R\'enyi and Sulanke \cite{RS63, RS64} dealing with polygons and smooth convex bodies in the two-dimensional case. Their results for smooth convex bodies were generalized to higher dimensions by Wieacker \cite{Wi}, Schneider and Wieacker \cite{SchWi1}, B\'ar\'any \cite{Bar2}, Sch\"utt \cite{Schu2} and B\"or\"oczky, Hoffmann and Hug \cite{BHH}, and to all intrinsic volumes by B\'ar\'any \cite{Bar2, Bar2c} and Reitzner \cite{Re04}. It turns out that the difference between the intrinsic volume of $K$ and the random polytope with uniformly distributed points in the interior of $K$ is asymptotically of order $n^{- \frac 2 {d+1}}$.  The corresponding results for polytopes are much more difficult. The results by R\'enyi and Sulanke for polygons were generalized in a long and intricate proof by B\'ar\'any and Buchta \cite{BB93}, from which it follows that the order of the volume difference is $n^{-1} (\ln n)^{d-1} $.

Due to Efron's identity, \cite{Ef}, the results on the volume difference can be used to determine the expected number $\E f_0(P_n)$ of vertices of $P_n$. More general, asymptotic expressions for the number of $k$-dimensional faces $f_k (P_n)$ are due to Wieacker \cite{Wi}, B\'ar\'any and Buchta \cite{BB93} and Reitzner \cite{Re05}.
In recent years the results concerning expectations of functionals of the random polytope have been accomplished by variance estimates and central limit theorems, see 
\cite{BR10a, BR10, BS13, CSY13, CY14, CY17, Gro88, Gry21, GRT23, LachSchulteYukich, Pardon1, Pardon2, Re03, Re05, ThTuWe18}.

Summarizing the results on expectations and variances for the $f$-vectors we have
\begin{eqnarray*}
\E f_k  (P_n) &=& c_{d,k} \, \Omega(K) \, n^{ \frac {d-1} {d+1}}(1+o(1)),
\\
\V f_k  (P_n) &=& c_{d,k} \, \Omega(K) \, n^{ \frac {d-1} {d+1}}(1+o(1)),
\end{eqnarray*}
for $k  \in \{0, \dots, d-1\}$, if $K$ is a smooth convex body and the random polytope $P_n$ is the convex hull of $n$ independent uniformly distributed random points in the interior of $K$.
(Throughout this paper, we denote by $c_{d,k}$ constants only depending on $d$ or $k$ and by $c,C,c_0,\dots$ constants which may additionally depend on $K$ and whose precise values may differ from line to line.)
And if $K$ is a polytope, then
\begin{eqnarray*}
\E f_k  (P_n) &=&  c_{d,k}\,   \operatorname{flag}(P) \, (\ln n)^{ {d-1}} (1+o(1)) ,
\\
\V f_k  (P_n) &=&  c_{d,k}\,   \operatorname{flag}(P) \, (\ln n)^{ {d-1}} (1+o(1)) ,
\end{eqnarray*}
where ${\rm flag}(P)$ is the number of flags of $P$. The last equality is known to hold for simple polytopes $K$ whereas for general polytopes $K$ only the exact order $(\ln n)^{ {d-1}}$ is known. 

\medskip
In this paper, we are investigating the case that the points $X_1,\dots,X_n$ are uniformly distributed on the boundary of a smooth convex body $K$ equipped with $(d-1)$-dimensional Hausdorff measure $\cH^{d-1}$. Here, a convex body is smooth, $K \in \cC^2_+$, if its boundary is a twice continuously differentiable manifold with everywhere positive Gaussian curvature $\k(x)$ for $x\in \bd K$. Taking the convex hull of the random points gives a random polytope $P_n$ where, almost surely, all $n$ random points are vertices,
$$ f_0(P_n)=n ,$$
because $K$ is strictly convex. In dimension two, this implies $f_1(P_n)=n$ and thus the combinatorial structure of $P_n$ is deterministic. This also holds in dimension three, where by Euler's polytope formula, almost surely, 
$$f_0(P_n) = n,\ f_1(P_n)= 3n-6,\ f_2(P_n) = 2n-4 $$
holds. Clearly, this shows that $\V f_k(P_n)=0$ for $k=0$ and for all $k$ in dimension $d \leq 3$.
It was observed by Stemeseder \cite{Stemeseder} that for $d \geq 4$ the random variable $f_k (P_n)$ is not deterministic for $1 \leq k  \leq d-1$. In his (unpublished) PhD thesis he proved that
\begin{equation} \label{eq:exp-faces}
\E \, f_{k } (P_n) = c_{d,k } n(1+ o(1)),
\end{equation}
and thus, perhaps surprisingly, the asymptotic behavior to first order is independent of $K$. The constant $c_{d,k}$ in \eqref{eq:exp-faces} can be expressed in terms of the expected internal angles of random simplices inscribed into the unit sphere, see Kablucko, Thäle and Zaporozhets \cite[Remark 1.9]{KTZ20}, and is known explicitly, see \cite{Kab21, Kab23}. In fact, for $X_1,\dots,X_n$ uniformly distributed on the unit sphere, the expectation $\E\,f_k(P_n)$ itself is known by \cite[Thm. 1.3]{KTZ20} but even in this simpler case the variance $\V f_k(P_n)$ appears to be unknown for $d\ge 4$.

For general $K\in \cC^2_+$, an upper bound for the variance follows immediately from the Efron-Stein jackknife inequality and Theorem 10 in \cite{Re03}, 
\begin{equation} \label{eq:var-upper}
\V f_k (P_n) \leq n \E (f_k (P_{n+1})-f_k (P_n))^2  \leq C n .
\end{equation}
A lower bound for the variance is much more involved since the variance vanishes for $k=0$ and $d \leq 3$, and it is a priori unclear whether the variance asymptotically vanishes also in other cases.
It is the aim of this contribution to prove the missing lower bound for the variance.

\begin{theorem}\label{thm:var}
	Let $d\ge 4$ and let $X_1,\dots,X_n$ be independently and uniformly distributed on the boundary of a smooth $d$-dimensional convex body $K\in \mathcal{C}^2_+$ and $P_n=[X_1,\dots,X_n]$. Then there exist constants $c>0$ and $n_0\in \N$ such that, for every $n\ge n_0$ and $k=1,\dots,d-1$,
$$
\V f_k(P_n)\ge c n.
$$
\end{theorem}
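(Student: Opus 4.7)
The plan is to apply the Penrose--Yukich stabilization framework to the score representation highlighted in the abstract, thereby reducing the variance lower bound to a positivity statement for a limiting local variance at the apex of a paraboloid. The hypothesis $d\ge 4$ will enter precisely at that last step.

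First I would write
$$(k+1)\,f_k(P_n) \;=\; \sum_{i=1}^{n} \xi_k(X_i,\cP_n), \qquad \cP_n = \{X_1,\dots,X_n\},$$
where $\xi_k(X_i,\cP_n)$ is the number of $k$-faces of $P_n$ incident to the (a.s.) vertex $X_i$. By the exponential stabilization of $\xi_k$ developed in the paper, there is a random stabilization radius $R_i$ with $\P(R_i > r n^{-1/(d-1)}) \le C e^{-c r^\alpha}$ such that $\xi_k(X_i,\cP_n)$ only depends on $\cP_n$ inside a geodesic ball around $X_i$ of radius $R_i$. A standard de-Poissonization argument reduces the problem to a Poisson input of intensity $n/\cH^{d-1}(\bd K)$ on $\bd K$.

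Next, for each $z\in\bd K$ I would zoom in by the factor $n^{1/(d-1)}$. Because $K\in\cC^2_+$, the rescaled boundary converges locally to an elliptic paraboloid $\Pi_z$ determined by the second fundamental form of $\bd K$ at $z$, and the rescaled Poisson sample converges to a unit-intensity Poisson process $\eta_z$ on $\Pi_z$. Exponential stabilization combined with bounded moments then yields, via the standard Penrose--Yukich variance formula,
$$
\V f_k(P_n) \;=\; n \int_{\bd K} \sigma_k^2(z)\,\cH^{d-1}(\dint z)\cdot\bigl(1+o(1)\bigr),
$$
where $\sigma_k^2(z) = \V \hat\xi_k(0,\eta_z)$ is the variance of an explicit limit score at the apex of $\Pi_z$. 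Combined with the matching upper bound recalled in \eqref{eq:var-upper}, the claimed lower bound $\V f_k(P_n) \ge c n$ reduces to the uniform positivity $\inf_{z\in\bd K} \sigma_k^2(z) > 0$.

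This positivity is the main obstacle. I would establish it by exhibiting, for each $z$, two disjoint events $A_1,A_2$ of positive probability for the Poisson process $\eta_z$ on which $\hat\xi_k(0,\eta_z)$ attains two different deterministic values. The concrete mechanism is to prescribe, inside a small ball $B\subset\Pi_z$ near the apex, two configurations of a bounded number of points that produce different numbers of $k$-faces through $0$ after taking the convex hull; the stabilization guarantees that what happens outside $B$ is irrelevant with high probability. The availability of such a perturbation requires $d\ge 4$: in dimensions $\le 3$ the entire $f$-vector is affinely determined by $f_0$ through Euler's relation and leaves no combinatorial slack, whereas in dimension four and higher a suitable perturbation of one vertex of a small simplex near the apex provably creates or destroys a $k$-face incidence at $0$. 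I expect the most delicate part to be making this perturbation uniform in $z\in\bd K$ and in $k\in\{1,\dots,d-1\}$, which I would attack through an explicit one-point-move construction on cyclic-polytope type configurations, whose $f$-vectors are known to shift under such moves.
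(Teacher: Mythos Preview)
Your strategy has a genuine gap at the crucial step. The limiting density $\sigma_k^2(z)$ in the Penrose--Yukich variance formula is \emph{not} $\V\hat\xi_k(0,\eta_z)$; it is this term plus a two-point covariance integral, and the latter can be negative. Consequently, showing that the single-point score $\hat\xi_k(0,\eta_z)$ is nondegenerate does not imply $\sigma_k^2(z)>0$. Dimension three makes this concrete: the number of edges (or triangles) of $P_n$ through a fixed vertex is genuinely random, so $\V\hat\xi_k(0,\eta_z)>0$ for $k=1,2$, yet $f_1(P_n)=3n-6$ and $f_2(P_n)=2n-4$ are deterministic and the true limiting variance vanishes. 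Your proposed positivity argument---two events on which $\hat\xi_k(0,\eta_z)$ takes two different values---would go through equally well for $d=3$, which shows it cannot be the mechanism that singles out $d\ge4$. The hypothesis $d\ge4$ must enter through control of the \emph{global} face count under a local perturbation, not merely of the local score. A secondary issue: the paper points out that de-Poissonization is unavailable here, since Poissonizing already injects order-$t$ variance via $f_0(P_N)=N$, of the same order as the target; so passing to Poisson input and back is not an innocuous reduction.

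The paper's proof avoids any scaling limit. It works directly with the binomial sample and the bound $\V f_k(P_n)\ge\E\bigl[\V(f_k(P_n)\mid\cF)\bigr]$, after constructing in each of $n$ disjoint caps an event of uniformly positive probability on which exactly $d+2$ sample points fall in prescribed subregions. A geometric barrier condition (the base of the cap lies in the cone over the simplex spanned by $d+1$ of these points) guarantees that moving the remaining point inside the cap changes only the faces inside that cap; the resulting change in $f_k(P_n)$ is $f_k(\cT_j^d)-f_k(\cT_1^d)$ for some $j$ determined by how many facets of the simplex the moving point lies beyond. Since $f_k(\cT_1^d)<f_k(\cT_2^d)$ for $k\ge1$ precisely when $d\ge4$, each such cap contributes a uniformly positive amount to the conditional variance, and summing over the $n$ caps yields the claim.
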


This variance lower bound is one essential step to prove a central limit theorem. The second ingredient is the exponential decay of the radius of stabilization, which will be defined and discussed in Section~\ref{sec:clt}. Combined with a quantitative central limit theorem by Lachieze-Rey, Schulte and Yukich \cite{LachSchulteYukich} for exponentially stabilizing functionals, which is derived by Stein's method, this will yield the following central limit theorem for the face numbers of $P_n$, which is novel even in the special case of points uniformly distributed on the unit sphere.

\begin{theorem}\label{thm:CLT}
	Let $d\ge 4$ and $n\ge d+2$. Let $X_1,\dots,X_n$ be independently and uniformly distributed on the boundary of a smooth $d$-dimensional convex body $K\in \cC_{2}^+$ and $P_n=[X_1,\dots,X_n]$. For every $k=1,\dots,d-1$, the rescaled fluctuations of $f_k(P_n)$ around $\E f_k(P_n)$ converge in distribution to a standard Gaussian variable $Z \sim \cN(0,1)$ and 
$$
d_K \Big(\frac{f_k(P_n)-\E f_k(P_n)}{\sqrt{\V f_k(P_n)}},Z\Big)
\le \frac{C}{\sqrt{n}} ,
$$
where $d_K(\cdot,Z)=\sup_{t\in \IR}|\IP(\cdot\le t)-\IP(Z\le t)|$ denotes the Kolmogorov distance to $Z$. 
\end{theorem}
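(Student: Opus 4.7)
The strategy is to apply the quantitative Berry-Esseen bound of Lachieze-Rey, Schulte and Yukich \cite{LachSchulteYukich} for stabilizing functionals of i.i.d.\ samples. Three ingredients are needed: (i) a representation of $f_k(P_n)$ as a sum of local scores indexed by the sample points; (ii) exponentially fast stabilization of those scores; (iii) a variance lower bound of order $n$, which is supplied by Theorem \ref{thm:var}. For (i), I would set
$$
\xi_k(x,\cX) := \frac{1}{k+1}\cdot \#\{F\in \cF_k([\{x\}\cup\cX]) : x\in F\},
$$
where $\cF_k(Q)$ denotes the set of $k$-faces of a polytope $Q$. Because $K\in\cC^2_+$ is strictly convex, almost surely every vertex of $P_n$ is some $X_i$ and every $k$-face has exactly $k+1$ vertices, so that $f_k(P_n)=\sum_{i=1}^n \xi_k(X_i,\{X_1,\dots,X_n\}\setminus\{X_i\})$ almost surely.

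The heart of the argument is (ii). I would introduce a radius of stabilization $R(x,\cX)$ with the property that $\xi_k(x,\cX\cup \cY)$ is unchanged for every finite $\cY\subseteq \bd K\setminus B(x,R(x,\cX))$, and establish an exponential tail of the form $\P(R(X_1,\{X_2,\dots,X_n\})>t)\le C\exp(-c n t^{d-1})$. The key observation is that a simplex with vertices on $\bd K$ is a $k$-face of $[\cX]$ precisely when it is contained in some supporting halfspace whose opposite cap is sample-free. Using $K\in\cC^2_+$, the Gauss map provides a bi-Lipschitz parametrization of a neighborhood of $x\in \bd K$ by the tangent sphere under which boundary caps of geodesic radius $r$ have $\cH^{d-1}$-measure of order $r^{d-1}$; such a cap is therefore sample-free with probability at most $(1-c r^{d-1})^n\le \exp(-cn r^{d-1})$. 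The main obstacle is the localization: I have to show that as soon as all caps around $x$ of radius $t$ contain a sample point, no supporting hyperplane relevant to a $k$-face through $x$ can have its contact point farther than distance $t$ from $x$. This requires quantitative convexity estimates from uniformly positive curvature and is where the paper's representation as exponentially stabilizing scores comes in.

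Finally, the remaining hypotheses of \cite{LachSchulteYukich} are uniform $(4+p)$-th moment bounds on $\xi_k(X_1,\cdot)$ and on its add-one and difference costs. These follow from the deterministic estimate $\xi_k(x,\cX)\le f_k(P_n)$, the moment bounds behind \eqref{eq:var-upper} and \cite{Re03}, and the fact that exponential stabilization converts differences of $\xi_k$ into quantities controlled by moments of $R(x,\cX)$. Plugging these ingredients into the Lachieze-Rey-Schulte-Yukich theorem produces a bound of the form
$$
d_K\Big(\frac{f_k(P_n)-\E f_k(P_n)}{\sqrt{\V f_k(P_n)}},Z\Big) \le \frac{C\sqrt{n}}{\V f_k(P_n)},
$$
and combining this with the lower bound $\V f_k(P_n)\ge cn$ from Theorem \ref{thm:var} yields the Berry-Esseen rate $n^{-1/2}$; the distributional convergence to $Z\sim \cN(0,1)$ is then automatic.
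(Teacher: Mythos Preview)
Your strategy matches the paper's: represent $f_k$ as a sum of scores $\xi_k$, prove exponential stabilization, verify the moment hypotheses, and apply Corollary~2.2(b) of \cite{LachSchulteYukich} together with Theorem~\ref{thm:var}. The score and the radius of stabilization you propose coincide with those in the paper, and your final display is exactly how the rate $n^{-1/2}$ is obtained.

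Two places deserve more care. For the exponential tail of $R$, your condition ``all caps around $x$ of radius $t$ contain a sample point'' involves uncountably many caps, so a direct union bound is not available. The paper handles this by a covering argument (Lemma~\ref{le:points_cover}): for fixed $x$ and $r$ it constructs $2^{d-1}$ deterministic points $z_1,\dots,z_{2^{d-1}}\in\bd K$ such that \emph{every} empty cap of diameter $\ge r$ touching $x$ must contain the full surface ball $B(z_i,c r)\cap\bd K$ for some $i$. The event $\{R\ge r\}$ is then contained in a union of $2^{d-1}$ emptiness events, each with probability $\le(1-c' r^{d-1})^n$. Your sketch does not yet contain this finite reduction.

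For the moment bounds, the hypotheses of \cite{LachSchulteYukich} require $\sup_n \E[\xi_k(x,\cX_{n-8}\cup\{x\}\cup \cY)^q]<\infty$ for some $q>4$. The deterministic estimate $\xi_k(x,\cX)\le f_k(P_n)$ only gives a bound of order $n$, and \eqref{eq:var-upper} together with \cite{Re03} controls $\E(f_k(P_{n+1})-f_k(P_n))^2$, not higher moments of the individual scores. The paper instead deduces the moment bounds directly from stabilization (Proposition~\ref{pro:moments}): $\xi_k(x,\cdot)$ is bounded by a polynomial in the number $N_0$ of sample points in $B(x,R(x,\cdot))$, and $N_0$ has exponential tails because $R$ does and because binomial counts in surface balls of area $t/n$ concentrate. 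Once these two points are fixed, the rest of your argument goes through as written.
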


\begin{remark}
Since the resulting rate of convergence in Theorem~\ref{thm:CLT} is of the same order as $(\V  f_k(P_n))^{-1/2}$ and since $f_k(P_n)$ is an integer-valued random variable, an argument of Englund \cite[Section 6]{Englund} shows that, up to numerical constants, our result is in fact optimal.
\end{remark}

\begin{remark}
	All results generalize easily to the case when the random points are sampled on $\partial K$ according to a continuous positive density $h$ with respect to $\cH^{d-1}$. In this case, the constants also depend on the positive numbers $\min_{x\in \partial K}h(x)$ and $\max_{x\in \partial K}h(x)$.
\end{remark}

\begin{remark}
For a different reason, a change in behavior at $d=4$ occurs also for the order of magnitude of the expected minimal area of facets of random polytopes inscribed into the sphere, see Leroux, Rademacher, Schütt and Werner \cite{LRS+25}. Yet another such threshold at $d=4$ was observed for the shape of fluctuations of the total surface area of a Poisson point process of hyperbolic hyperplanes, see Herold, Hug, Kabluchko, Rosen and Thäle \cite{HHT21,KRT25}.
\end{remark}

Corresponding results for the volume difference $V_d(K)-V_d(P_n)$ and the intrinsic volumes $V_i(K)-V_i(P_n)$ have already been obtained. First contributions are due to Buchta, M\"uller and Tichy \cite{BMT85}, Schneider \cite{Schn88}, Sch\"utt and Werner \cite{SW5}, Reitzner \cite{Re02} and B\"or\"oczky, Fodor, and Hug \cite{BFH13} who investigated the expectation of the volume difference, respectively intrinsic volume difference.  Bounds for the variance and central limit theorems are due to Reitzner \cite{Re03}, Richardson, Vu, and Wu \cite{RVW08}, Th\"ale \cite{Thaele18} and Turchi and Wespi \cite{TuWe}. Generalizations to other geometries are by Besau, Rosen and Thäle \cite{BRT21}.

If the underlying convex body $K$ is a polytope and the random polytope $P_n$ is the convex hull of uniform random points on the boundary, the situation is much more complicated. It is immediate that in this case even $f_0(P_n)$ is a random variable, and only recently the expectation of $f_0(P_n), f_{d-1}(P_n)$ and $V_d(K)-V_d(P_n)$ have been obtained by Reitzner, Sch\"utt and Werner \cite{RSW23}. The other face numbers, intrinsic volumes, variance asymptotics or central limit theorems seem to be out of reach at the moment. Even less is known for the Hausdorff distance between $K$ and $P_n$, see Prochno, Sch\"utt, Sonnleitner and Werner \cite{PSS+25}.

\medskip
We want to remark that there is a modified model including a further randomization, where instead of taking $n$ random points, the number of points $N$ is Poisson distributed with mean $t$. Then $X_1,\dots,X_N$ is distributed like a Poisson point process. In particular, restrictions to disjoint parts are then independent from each other. This additional Poissonization turned out to be a powerful method to derive variance bounds and limit theorems for the random polytope model with uniform points inside the convex body $K$, and de-Poissonization methods yielded limit theorems for fixed $n=t$. For our model with points on $\bd K$, this Poissonized model was investigated by Richardson, Vu, and Wu \cite{RVW08} and Stemeseder \cite{Stemeseder} for the volume difference between $K$ and $P_N$.  This method with subsequent de-Poissonization will not work in our case because the additional variance from Poissonization is of the same order as the variance of $f_k(P_n)$ itself. This rules out the possiblity to prove limit theorems for the random polytopes generated by random points on $\bd K$ with the same method as for random polytopes generated by uniform points in $K$. This seems to be the reason why there are no results for $f_k(P_N)$ even in the Poisson setting. 

As a consequence of our results, we obtain variance bounds and a central limit theorem for the Poisson boundary polytope. 

\begin{theorem}\label{thm:poisson}
Let $d\ge 4$ and $N\sim\pi(t)$ be Poisson distributed with parameter $t>1$. Let $X_1,\dots,X_N$ be independently and uniformly distributed on the boundary of a smooth $d$-dimensional convex body $K\in \cC_{2}^+$ and $P_N=[X_1,\dots,X_N]$. There exists $t_0>1$ such that, for every $t\ge t_0$ and $k=1,\dots,d-1$,
\begin{equation}\label{eq:Poisson-Var}
c\, t \le \V f_k(P_N)\le C\, t
\end{equation}
and 
\begin{equation}\label{eq:Poisson-CLT}
d_K \Big(\frac{f_k(P_N)-\E f_k(P_N)}{\sqrt{\V f_k(P_N)}},Z\Big)\le \frac{C}{\sqrt{t}} .
\end{equation}
\end{theorem}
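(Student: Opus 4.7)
The plan is to deduce Theorem~\ref{thm:poisson} from Theorems~\ref{thm:var} and~\ref{thm:CLT} together with the score-function framework set up in Section~\ref{sec:clt}. Write $\eta_t$ for the Poisson point process on $\bd K$ with intensity measure $\mu_t:=(t/\cH^{d-1}(\bd K))\,\cH^{d-1}|_{\bd K}$, so that $P_N=[\eta_t]$.

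For the upper variance bound in \eqref{eq:Poisson-Var}, I would apply the Poincar\'e inequality for Poisson functionals,
\[
\V f_k(P_N) \le \int_{\bd K} \E\bigl(D_x f_k(P_N)\bigr)^2 \,\mu_t(\dd x),
\]
where $D_x f_k(P_N) = f_k([\eta_t\cup\{x\}]) - f_k([\eta_t])$ is the add-one cost. The same one-point difference estimate underlying \eqref{eq:var-upper}, which rests on Theorem~10 of \cite{Re03}, controls the integrand uniformly in $x$, yielding $\V f_k(P_N) \le C t$.

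For the lower bound I would condition on $N$ and invoke the law of total variance,
\[
\V f_k(P_N) \ge \E\bigl[\V\bigl(f_k(P_N)\mid N\bigr)\bigr] \;=\; \sum_{n\ge 0}\P(N=n)\,\V f_k(P_n).
\]
Theorem~\ref{thm:var} provides $\V f_k(P_n)\ge c n$ for all $n\ge n_0$, and since $N\sim\pi(t)$ satisfies $\E[N\1_{N\ge n_0}]\ge t/2$ once $t$ is sufficiently large, this yields $\V f_k(P_N)\ge c t$.

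For the Berry-Esseen bound \eqref{eq:Poisson-CLT}, I would use the representation of $f_k(P_N)$ as a sum of exponentially stabilizing score functions developed in Section~\ref{sec:clt}. This places us squarely in the hypotheses of the normal approximation theorem for Poisson functionals of Lachieze-Rey, Schulte and Yukich \cite{LachSchulteYukich}, whose bound combined with the lower variance estimate $\V f_k(P_N)\ge c t$ just established delivers the rate $C/\sqrt{t}$. The conceptual obstacle highlighted in the introduction---that Poissonization inflates the variance to the same order as the binomial variance and so forbids a naive de-Poissonization of Theorem~\ref{thm:CLT}---is sidestepped by this direct route: once the two ingredients (a variance lower bound of order $t$ and exponential stabilization) are in hand, the Poisson theorem falls out, with the conditioning argument above as the only genuinely new step.
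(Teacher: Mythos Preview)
Your proposal is correct and follows essentially the same route as the paper's proof: the lower variance bound via conditioning on $N$ and Theorem~\ref{thm:var}, the upper bound via the Poincar\'e inequality for Poisson functionals together with Theorem~10 of \cite{Re03}, and the Berry--Esseen bound by applying the Poisson version of \cite[Corollary~2.2]{LachSchulteYukich} to the same stabilizing scores as in Section~\ref{sec:clt}. Your treatment of the lower bound is in fact slightly more careful than the paper's, since you explicitly handle the restriction $n\ge n_0$ from Theorem~\ref{thm:var} rather than writing $\E[\V(f_k(P_N)\mid N)]\ge \E\,cN$ directly.
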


\bigskip

The paper is organized in the following way. The next section contains background on the combinatorial structure of polytopes and the geometry of smooth convex bodies. In Section~\ref{sec:var} we give a proof of Theorem~\ref{thm:var}. Section~\ref{sec:clt} contains the proof of Theorem~\ref{thm:CLT} and in Section~\ref{sec:poisson} we prove Theorem~\ref{thm:poisson}.

\section{Background material}\label{sec:prelim}

\subsection{Polytopes with {$d+2$} vertices} \label{sec:polytope}

We introduce some basic notation. For a $d$-dimensional polytope $P$ we denote by $\cF_k(P)$ the set of its $k$-dimensional faces and write $f_k(P)$ for the cardinality of $\cF_k(P)$. The $f$-vector of $P$ is given by $(f_0(P),\dots,f_{d-1}(P))$. The affine span of a facet $F\in \cF_{d-1}(P)$ is an affine hyperplane $H_F$ and we denote by $H_F^+$ the corresponding closed half-space meeting $P$ only in its boundary and by $H_F^-$ the closed half-space containing $P$. 

To construct polytopes with few vertices, the beyond-beneath construction can be employed. A comprehensive account of this method and convex polytopes in general is Grünbaum's book \cite{Gruenbaum}. A point $x\in \IR^d$ is said to be beyond $F$ if it lies in $\Int H_F^+$ and beneath $F$ if it is in $\Int H_F^-$. Equivalently, $x$ is beyond $F$ if it sees $\relint F$ (the relative interior of $F$ is taken with respect to $H_F$), that is, if for any $y\in \relint F$ the line segment $[x,y]$ does not intersect $\Int P$.

The combinatorial type classifies polytopes according to their combinatorial structure. Two polytopes have the same combinatorial type if there exists an inclusion-preserving one-to-one correspondence between their sets of faces. In particular, then their $f$-vectors are identical. Trivially, the convex hull of $d+1$ affinely independent points is always a $d$-simplex and of one type. Adding one more point gives rise to $\lfloor \frac d2 \rfloor$ combinatorial types $\cT_j^d$, $j=1, \dots, \lfloor \frac d2  \rfloor$, satisfying
$$
f_k(\cT_j^d)
=
\binom{d+2}{d-k+1} - \underbrace{\binom{j+1}{d-k+1}}_{ k\geq d-j}  - \underbrace{\binom{d-j+1}{d-k+1}}_{k \geq j},\qquad k=1,\dots,d-1.
$$
In particular, for $d\leq 3$ there is only one type $\cT^d_1$ but in dimension $d \geq 4$ there exists $\cT^d_2$ with
\begin{equation} \label{eq:types}
f_k(\cT_1^d) < f_k(\cT_2^d),\qquad k =1, \dots, d-1.
\end{equation}
There is a precise description of these types.  A polytope $P$ with $d+2$ vertices is the convex hull of a $d$-simplex $S$ and a point $x\in\IR^d\setminus S$ and is of type $\cT^d_j$ for
\[
j=\bey(x,S):=|\{F\in \cF_{d-1}(S)\colon x \text{ is beyond }F\}|\in \{1,\dots,d-1\},
\]
if $j\le \lfloor \frac{d}{2}\rfloor$, and of type $\cT^d_{d-j}$, otherwise. Note that by Theorem 5.2.1 in \cite{Gruenbaum}, $\bey(x,S)=d$ implies $P=S$, which is not possible, and that $\bey(x,S)=0$ if and only if $x\in S$.  Given a $d$-simplex $S$ the regions
\begin{equation} \label{eq:beyond}
S(j)=\{x\in\IR^d\setminus S\colon \bey(x,S)=j\},\qquad j=1,\dots,d-1,
\end{equation}
partition $\IR^d\setminus S$ into polyhedral open sets whose closures cover $\IR^d\setminus S$ and which meet at the vertices of $S$. Given $x\in \IR^d\setminus S$, the type of $[S,x]$ is determined by the region $S(j)$ it belongs to (and undetermined if $x$ belongs to multiple regions). 

\begin{remark}
Almost surely, the random polytopes we consider are simplicial, that is, all of their faces are simplices. By the Dehn-Sommerville equations, which generalize Euler's polytope formula, the $f$-vectors of simplicial polytopes span an affine space of dimension $\lfloor \frac{1}{2}d\rfloor$. Thus, fixing the number of vertices leaves no degree of freedom in dimensions three or less but one degree in dimensions four and five, and so on. 
\end{remark}

\subsection{Smooth convex bodies}\label{sec:smooth}

Let $K\in \cC^2_+$, where $\cC^2_+$ is the set of convex bodies in $\IR^d$ with twice continuously differentiable boundary and everywhere positive Gaussian curvature. In the following, we collect implications of the assumption $K\in \cC^2_+$ on the surface area ($d-1$-dimensional Hausdorff measure) of caps and balls in $\partial K$.

For a unit vector $u\in \mathbb{S}^{d-1}$ and $t\in \IR$, we denote by $H(u,t)$ the affine hyperplane $\{x\in\IR^d\colon  \langle x,u\rangle=t\}$ and by $H^+(u,t)$ the corresponding halfspace $\{x\in\IR^d\colon  \langle x,u\rangle\ge t\}$. The support function $h_K(u)$ of $K$ at $u$ is given by $h_K(u)=\sup_{x\in K}\langle x,u\rangle$ and $H(u,h_K(u))$ is a supporting hyperplane.  Since $K\in \cC^2_+$, the intersection of $K$ with $H^+(u,h_K(u))$ is a single point $y$ on the boundary $\bd K$ and $u_y=u$ is the outward unit normal of $K$ at $y$. Denote by
\[
C^{\bd K}(y,h)
=\bd K\cap H^+(u_y,h_K(u_y)-h)
\]
a non-solid cap of $K$ with center $y\in \bd K$ and height $h>0$. Replacing $\bd K$ by $K$ we obtain the corresponding solid cap. For the proof of Theorem~\ref{thm:var}, we need the following construction of pairwise disjoint caps on $\bd K$ of equal height and comparable surface area. 

\begin{lemma}\label{lem:covering}
Let $K\in \cC^2_+$. There are constants $c,C>0$ such that for any $n\in\IN$ we find pairwise disjoint caps $C^{\bd K}(y_1,h_n),\dots,C^{\partial K}(y_n,h_n)$	with $y_1,\dots,y_n\in \partial K$ and 
\[
c\, n^{-\frac{2}{d-1}}
\le h_n
\le C\, n^{-\frac{2}{d-1}}.
\]
Moreover,
\[
c\, n^{-1}\le \cH^{d-1}\big(C^{\bd K}(y_i,h_n)\big)\le C\, n^{-1}.
\]
\end{lemma}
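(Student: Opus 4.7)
The plan is to combine uniform local estimates for caps on $\bd K$ with a greedy packing argument.

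First, I would establish uniform local comparability estimates. Compactness of $\bd K$ together with $K\in \cC^2_+$ implies that the principal curvatures are bounded above and below by positive constants depending only on $K$. At every $y \in \bd K$, the surface is locally the graph of a $\cC^2$ function over the tangent hyperplane whose Hessian has uniformly bounded, uniformly positive eigenvalues. Hence, for $h \le h_0$ with $h_0$ depending only on $K$, the cap $C^{\bd K}(y, h)$ is close to the corresponding slice of the osculating paraboloid at $y$, and there are constants $c_1, C_1, r_1, r_2 > 0$ independent of $y$ such that
$$
c_1 h^{(d-1)/2} \le \cH^{d-1}(C^{\bd K}(y, h)) \le C_1 h^{(d-1)/2}
$$
and
$$
B_{\bd K}(y, r_1 \sqrt{h}) \subseteq C^{\bd K}(y, h) \subseteq B_{\bd K}(y, r_2 \sqrt{h}),
$$
where $B_{\bd K}(y, r)$ denotes the geodesic ball in $\bd K$ of radius $r$ centered at $y$. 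Geodesic balls likewise satisfy $\cH^{d-1}(B_{\bd K}(y,r)) \le C r^{d-1}$ for $r$ small.

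Next, I set $h_n = \a n^{-2/(d-1)}$ with $\a > 0$ to be chosen, and select $y_1,y_2,\dots\in \bd K$ greedily so that the caps $C^{\bd K}(y_i,h_n)$ are pairwise disjoint, stopping once the family is maximal. Write $N=N(h_n)$ for the resulting count. Maximality means that for every $z\in \bd K$ the cap $C^{\bd K}(z,h_n)$ meets some $C^{\bd K}(y_i,h_n)$, and using the inclusion of caps in geodesic balls of radius $r_2\sqrt{h_n}$ this forces
$$
\bd K \subseteq \bigcup_{i=1}^{N} B_{\bd K}(y_i, 2 r_2 \sqrt{h_n}).
$$
Applying the surface-area upper bound on geodesic balls yields $N \ge c\, \a^{-(d-1)/2} n$, so picking $\a$ small enough forces $N \ge n$; retaining any $n$ of the resulting caps gives the desired family. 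The surface-area bounds $c/n \le \cH^{d-1}(C^{\bd K}(y_i, h_n)) \le C/n$ follow at once from the comparability established in the first step.

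The main technical point is obtaining the comparability constants in the first step uniformly in $y$; continuity of the second fundamental form on the compact surface $\bd K$ is exactly what makes this work. A minor point is the low-$n$ regime in which $h_n > h_0$ could fail: this affects only finitely many $n$, and the conclusion then holds after enlarging or shrinking $c$ and $C$ appropriately, since for each such small $n$ one can exhibit a fixed number of disjoint caps of the correct order (indeed a single cap already suffices when $n=1$).
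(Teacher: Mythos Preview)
Your proof is correct and follows essentially the same approach as the paper. The paper does not give a self-contained argument: it simply says that Lemma~\ref{lem:covering} is derived from the construction in \cite[Lemma~6]{Reitz05} combined with the cap surface-area estimate stated as Lemma~\ref{lem:cap-convex}. Your step~1 is exactly that estimate, and your greedy maximal-packing argument in steps~2--4 is a standard way to realise the cited construction explicitly. One minor remark: once you fix $\a\le h_0$ you have $h_n=\a n^{-2/(d-1)}\le h_0$ for every $n\ge 1$, so your final paragraph about the ``low-$n$ regime'' is in fact unnecessary.
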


Lemma~\ref{lem:covering} is derived from the construction in Lemma 6 in \cite{Reitz05} and the following well-known asymptotics for the surface area of (non-solid) caps of smooth convex bodies, see e.g.~\cite[Lemma~6.2]{RVW08} and \cite[Lemma~5]{Reitz05}.

\begin{lemma}\label{lem:cap-convex}
Let $K\in \cC_+^2$. Then, for any $x\in \bd K$ and $h\le \diam(K)$, 
\[
c\, h^{\frac{d-1}{2}}
\le \cH^{d-1}\big(C^{\bd K}(x,h)\big)
\le C\, h^{\frac{d-1}{2}},
\]
and for $r\le \diam(K)$,
\begin{equation*} 
c \, r^{d-1}
\le \cH^{d-1}\big(B(x,r)\cap \bd K\big)
\le C\, r^{d-1},
\end{equation*}
where $B(x,r)$ denotes a Euclidean ball with center $x$ and radius $r$, $\diam(K)$ is the diameter of $K$, and $c,C>0$ only depend on $K$.
\end{lemma}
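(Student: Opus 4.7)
The plan is to reduce both estimates to the corresponding statements for Euclidean balls by sandwiching $K$ between an inscribed and a circumscribed ball at each boundary point. Since $K\in \cC^2_+$, the Gaussian curvature $\k$ is continuous and strictly positive on the compact set $\bd K$; combined with the $C^2$-regularity of $\bd K$, this gives two-sided bounds on the principal curvatures, and hence the uniform rolling ball property: there exist radii $0<r_1\le r_2$ such that at every $x\in \bd K$ a ball $B_1(x)\subset K$ of radius $r_1$ is internally tangent to $\bd K$ at $x$, while a ball $B_2(x)\supset K$ of radius $r_2$ is externally tangent at $x$.

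For the cap estimate, I fix $x\in \bd K$ and choose coordinates so that $x=0$ and the outer unit normal at $x$ equals $e_d$. Locally, $\bd K$ is the graph of a convex $C^2$ function $z\mapsto -f(z)$ over the tangent hyperplane, with $f(0)=0$ and $\nabla f(0)=0$. The inclusions $B_1(x)\subset K\subset B_2(x)$ translate into
\[
r_2-\sqrt{r_2^2-|z|^2}\le f(z)\le r_1-\sqrt{r_1^2-|z|^2},
\]
so for small $h>0$ the sublevel set $\{z\colon f(z)\le h\}$ is sandwiched between two Euclidean disks of radii of order $\sqrt{h}$. Since
\[
\cH^{d-1}\bigl(C^{\bd K}(x,h)\bigr)=\int_{\{f\le h\}}\sqrt{1+|\nabla f(z)|^2}\dd z
\]
and the Jacobian tends to $1$ as $h\to 0$, this yields $c\, h^{(d-1)/2}\le \cH^{d-1}(C^{\bd K}(x,h))\le C\, h^{(d-1)/2}$ on some uniform range $h\in(0,h_0]$.

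To extend the bounds to $h\in [h_0,\diam(K)]$ I would invoke compactness: the map $(x,h)\mapsto \cH^{d-1}(C^{\bd K}(x,h))$ is continuous and strictly positive on the compact set $\bd K\times [h_0,\diam(K)]$, and on this bounded range $h^{(d-1)/2}$ is also bounded above and below by positive constants; adjusting $c$ and $C$ accordingly preserves the two-sided bound. The second assertion on $\cH^{d-1}(B(x,r)\cap \bd K)$ follows along the same lines: in the local graph coordinates, $B(x,r)\cap \bd K$ corresponds to $\{z\colon |z|^2+f(z)^2\le r^2\}$, which thanks to the Hessian bound $f(z)=O(|z|^2)$ is sandwiched between two disks of radii comparable to $r$; the regime $r\asymp \diam(K)$ is again handled by compactness.

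The main obstacle is not conceptual but one of uniformity: to secure constants depending only on $K$, the Hessian control on $f$ and the size $h_0$ of the neighborhood on which the graph representation is valid must be independent of the base point $x\in \bd K$. This is ensured by the two-sided bounds on the principal curvatures together with the compactness of $\bd K$, both of which follow from the assumption $K\in \cC^2_+$.
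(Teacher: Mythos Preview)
Your proposal is correct and follows exactly the approach the paper indicates: the paper does not give a detailed proof of this lemma but states that it ``can be proven via Blaschke's rolling theorem'' and cites \cite[Lemma~6.2]{RVW08} and \cite[Lemma~5]{Reitz05}. Your argument---sandwiching $K$ between an inner and outer rolling ball, passing to a local graph representation, and handling the large-parameter regime by compactness---is precisely the standard fleshing-out of that hint.
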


The estimates in Lemma~\ref{lem:cap-convex} can be proven via Blaschke's rolling theorem, see e.g.~\cite[Cor.~3.2.13]{Sch14}. It states that there are positive radii $r_{in}=r_{in}(K)$ and $ r_{out}=r_{out}(K)$ such that for each boundary point $x \in \bd K$ there are balls of radius $r_{in}$, respectively $r_{out}$, touching $\bd K$ at $x$ from the inside, respectively outside. That is,
\begin{equation} \label{eq:in-out}
B(x-r_{in} u_x, r_{in}) 
\subset K
\subset B(x-r_{out} u_x, r_{out}).
\end{equation}
The following consequence of Blaschke's rolling theorem is used in the proof of Theorem~\ref{thm:CLT}.

\begin{lemma} \label{lem:surface-balls}
Let $K\in \cC_+^2$. Then, for any $x\in \bd K$ and $r>0$, 
\begin{equation*}
\limsup_{\e \to 0} \frac{\cH^{d-1}\big(B(x, r+\e)\cap \bd K\big) - \cH^{d-1}\big(B(x,r)\cap \bd K\big)}{\e}
\leq
C r^{d-2},
\end{equation*}
where $C>0$ only depends on $K$.
\end{lemma}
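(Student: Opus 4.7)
The plan is to split the problem into two regimes: $r\le r_0$, where the relevant portion of $\bd K$ lies in a single tangent-hyperplane chart centered at $x$ and the bound follows from a direct projection computation, and $r\ge r_0$, where $r^{d-2}$ is bounded below by $r_0^{d-2}$ so it suffices to obtain a uniform upper bound on the one-sided Dini derivative by a constant depending only on $K$.

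For the small-$r$ regime, I parametrize $\bd K$ in a neighborhood of $x$ as the graph $y=x+u-\phi(u)\,u_x$ over the tangent hyperplane $T_x\bd K\cong \R^{d-1}$, where $\phi$ is $C^2$, $\phi(0)=0$, $\nabla\phi(0)=0$, and by Blaschke's rolling theorem \eqref{eq:in-out}, $\frac{|u|^2}{2r_{out}}\le \phi(u)\le \frac{|u|^2}{2r_{in}}$ on a $K$-dependent neighborhood of $0$. The surface-area element is $\sqrt{1+|\nabla\phi(u)|^2}\,\dd u$, bounded by a constant, and $|y-x|^2=|u|^2+\phi(u)^2=|u|^2(1+O(|u|^2))$, so the preimage of $(r,r+\e]$ under $u\mapsto |y(u)-x|$ is contained in an annulus in $T_x\bd K$ of inner radius $\sim r$ and width $O(\e)$. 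A direct computation shows its $(d-1)$-Lebesgue measure is at most $C r^{d-2}\e(1+o(1))$ as $\e\to 0$, giving the claim in this regime.

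For $r\ge r_0$, I cover $\bd K$ by finitely many coordinate patches $U_i$ (in a number depending only on $K$), each representing $\bd K\cap U_i$ as the graph $\sigma_i\colon V_i\to \bd K$ over its tangent hyperplane at a reference point $y_i$, with $K$-dependent Lipschitz constants. The problem then reduces to bounding the Lebesgue measure of $\{u\in V_i: r^2<\psi_i(u)\le (r+\e)^2\}$ where $\psi_i(u)=|\sigma_i(u)-x|^2$. Away from the critical points of $\psi_i$, $|\nabla\psi_i|$ is bounded below by a positive constant, and the coarea formula yields a linear-in-$\e$ bound; summing over the finite cover then contributes $O(\e)$ uniformly in $r$.

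The main obstacle is the analysis at critical points of $\psi_i$, i.e., at $u_0$ where $\sigma_i(u_0)-x$ is parallel to the outward normal $u_{\sigma_i(u_0)}$ to $\bd K$. In an orthonormal principal-curvature basis of $T_{\sigma_i(u_0)}\bd K$, a direct computation gives $D^2\psi_i(u_0)=2\,\mathrm{diag}(1-\lambda k_1,\ldots,1-\lambda k_{d-1})$ with $\lambda=\langle\sigma_i(u_0)-x,u_{\sigma_i(u_0)}\rangle$ and principal curvatures $k_j\in[r_{out}^{-1},r_{in}^{-1}]$. This Hessian is non-degenerate for all but an exceptional lower-dimensional set of base points $x$, and by the Morse lemma the sublevel set $\{\psi_i\le r^2\}$ grows like $(r^2-r_*^2)^{(d-1)/2}$ near a critical value $r_*^2$, whose one-sided derivative remains bounded for $d\ge 3$. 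The degenerate case (when $\lambda k_j=1$ for some $j$) is handled by approximating $K$ by smoothly perturbed bodies in $\cC^2_+$ that avoid this focal coincidence and passing to the limit, which completes the proof.
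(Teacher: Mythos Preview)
Your small-$r$ regime coincides with the paper's proof: both parametrize $\partial K$ near $x$ as a graph over the tangent hyperplane, bound the surface element uniformly via the rolling-ball estimate, and control the Lebesgue measure of the projected annulus. The paper makes the radial step explicit by defining, for each $u\in\Sp^{d-2}$, the unique $\alpha_{u,s}>0$ with $(\alpha_{u,s}u,f(\alpha_{u,s}u))\in\partial B(0,s)$ and proving $\alpha_{u,s}\le\alpha_{u,s+t}\le\alpha_{u,s}+t$ from the inner rolling ball; this is your ``width $O(\e)$'' claim. For large $r$ the paper does nothing like your Morse analysis: it writes only ``Since $K$ is bounded, it is sufficient to show the statement for $r\le r_{in}/4$'' and treats that range alone.

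Your Morse-theoretic treatment of the regime $r\ge r_0$ has two genuine gaps. First, the growth $(r^2-r_*^2)^{(d-1)/2}$ is the behaviour at a local \emph{minimum} only; at a saddle of index $k$ the local model is $\psi=r_*^2-|w|^2+|v|^2$ with $(w,v)\in\R^k\times\R^{d-1-k}$, and the coarea integral for the derivative of the sublevel-set measure at the critical level is proportional to $\int_0^{\delta}\sigma^{d-4}\,d\sigma$. This diverges for $d=3$, where the one-sided difference quotient at $r=r_*$ grows like $\log(1/\e)$, so your claim ``bounded for $d\ge 3$'' is false at saddles; only for $d\ge4$ is the contribution finite, and you have not carried out that case. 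Second, the perturbation argument for degenerate critical points does not deliver a uniform constant: perturbing $K$ shifts $r_{in},r_{out}$, all chart data, and the Morse constants, and there is no upper semicontinuity of the $\limsup$ that would let you pass to the limit while retaining a bound depending only on the original $K$. Since the lemma is asserted for every $x\in\partial K$ and focal coincidences $\lambda k_j=1$ do occur for some $x$, this case cannot be dismissed by approximation.
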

\begin{proof}
Since $K$ is bounded, it is sufficient to show the statement for $r\le \frac{r_{in}}{4}$. Without loss of generality assume that $x=0$ and $u_x=-e_d$. Then \eqref{eq:in-out} becomes
\begin{equation} \label{eq:in-out-0}
B_{in}:=B(r_{in} e_d, r_{in})
\subset K
\subset B(r_{out} e_d, r_{out})=:B_{out}.
\end{equation}
We can parametrize $\bd K\cap B(0,\frac{r_{in}}{2})$ by a twice continuously differentiable convex function $f\colon \IR^{d-1}\to \IR$ such that 
\begin{equation*} 
\bd K\cap B\Big(0,\frac{r_{in}}{2}\Big)
=\{ (u,f(u))\colon u\in U\},\qquad U:=\Proj\Big(\bd K\cap B\Big(0,\frac{r_{in}}{2}\Big)\Big),
\end{equation*}
where $\Proj$ denotes the orthogonal projection onto $H(e_d,0)$, identified with $\IR^{d-1}$. For any $y\in \partial K\cap B(0,\frac{r_{in}}{2})$, the corresponding supporting hyperplane intersects the lower-dimensional ball $H(e_d,0)\cap B(0,\frac{r_{in}}{2})$ but not the interior of $B_{in}$. Therefore, 
\begin{equation*} 
\sup_{x\in U}\sqrt{1+\|\nabla f(x)\|^2} \le C,
\end{equation*}
where $C>0$ is an absolute constant. For $\e<\min\{\frac{r_{in}}{4},1\}$, the annular region 
\[
A_{r,\varepsilon}
:=\Proj\big(\big(B(0,r+\e)\setminus B(0,r)\big)\cap \bd K\big)
\]
is contained in $U$ and thus
\begin{align}\label{eq:annulus-bound}
	\cH^{d-1}\big( (B(x, r+\e)\setminus B(x,r))\cap \bd K\big)
	&=\int_{A_{r,\varepsilon}}\sqrt{1+\|\nabla f(x)\|^2}\dd x\notag\\
	&\le C \cH^{d-1}(A_{r,\varepsilon}).
\end{align}

We use spherical coordinates to parametrize $A_{r,\varepsilon}$. For any unit vector $u\in \Sp^{d-2}$ and $s>0$, let $\alpha_{u,s}>0$ be the unique number such that 
\[
\big(\alpha_{u,s}u,f(\alpha_{u,s}u)\big)\in \partial K\cap \partial B(0,s)
\]
and thus
$
A_{r,\varepsilon}
=\{\alpha_{u,s}u\colon r< s \le r+\varepsilon\}.
$
By \eqref{eq:in-out-0} it holds that, for each $u\in \Sp^{d-2}$ and any $s,t>0$ such that $t+s\le r_{in}$, 
\begin{equation*} 
	\alpha_{u,s}\le \alpha_{u,s+t} \le \alpha_{u,s}+t.
\end{equation*}
In particular, $\alpha_{u,s}\le s$. Therefore,
\begin{align*}
	\cH^{d-1}(A_{r,\varepsilon})
	&=\int_{\Sp^{d-2}}\int_{\alpha_{u,r}}^{\alpha_{u,r+\varepsilon}}s^{d-2}\dd s\, \dd\cH^{d-1}(u)\\
	&\le \frac{1}{d-1}\int_{\Sp^{d-2}}\big[(\alpha_{u,r}+\varepsilon)^{d-1}-\alpha_{u,r}^{d-1}\big]\dd\cH^{d-1}(u)\\
	&\le \cH^{d-1}(\Sp^{d-2})\big(\varepsilon r^{d-2}+\varepsilon^2(1+r)^{d-1}\big).
\end{align*}
Combined with \eqref{eq:annulus-bound} this completes the proof of Lemma~\ref{lem:surface-balls}.
\end{proof}

\section{Proof of the lower bound on the variance}\label{sec:var}

In this section we prove Theorem~\ref{thm:var}. Let $K\in \cC^2_+$ and assume without loss of generality that $\cH^{d-1}(\bd K)=1$. Inspired by the proof of the lower bound on the variance in \cite[Theorem~4]{Reitz05}, we use Lemma~\ref{lem:covering} to obtain a covering by $n$ caps of area roughly $n^{-1}$ and then locally approximate $\partial K$ by osculating paraboloids. We require the following construction of proportional subsets in each small cap. Recall that for a $d$-simplex $S$ we denote by $S(j)$ the sets in \eqref{eq:beyond}, which partition the exterior of $S$ into polyhedral sets meeting at the vertices of $S$.

\begin{proposition}\label{pro:lower-main}
	There exists $h_0>0$ such that the following holds. Let $C:=C^{\partial K}(y,h)$ be a cap with center $y\in \bd K$ and height $h\le h_0$. Then there exist pairwise disjoint sets $C^0,\dots,C^d\subset C$ with
\begin{equation} \label{eq:area-smallcaps}
	\cH^{d-1}(C^i)\ge c\,  \cH^{d-1}(C), \quad i=0,\dots,d.
\end{equation}
Moreover, there is a set $\tilde{C}^0\subset C$ containing $C^0$ such that for any choice of $x_0\in C^0,\dots, x_d\in C^d$, the simplex $\Delta_x=[x_0,\dots,x_d]$ satisfies 
\begin{equation} \label{eq:area-types}
	\cH^{d-1}\big(\tilde{C}^0\cap \Delta_x(j)\big)\ge c\, \cH^{d-1}(\tilde{C}^0),\qquad j=1,\dots,d-1,
\end{equation}
and  the relative boundary of $C$ is contained in the affine convex cone generated by $\Delta_x$ and apex $y$, that is,
\begin{equation} \label{eq:simplex-cone-affine}
	K\cap H(u_y,h_K(u_y)-h)\subset \{y+\lambda (z-y)\colon  z\in \Delta_x,\lambda\in \IR_+\}.
\end{equation}
The constant $c>0$ depends only on $K$.
\end{proposition}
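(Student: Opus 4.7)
I plan to normalize the cap to a standard cap on the osculating paraboloid at $y$ by an affine transformation, exhibit an explicit reference configuration there, and then pass to arbitrary $\Delta_x$ by a continuity argument. After a translation $y\to 0$, a rotation sending $u_y$ to $-e_d$, a linear map diagonalizing the second fundamental form of $\bd K$ at $y$, and the anisotropic scaling $(\tilde u,\tilde x_d)=(u/\sqrt h,\,x_d/h)$, I may assume that $\bd K$ is an $O(\sqrt h)$-perturbation in the $C^2$-topology of the paraboloid $\bd K^{\ast}=\{x_d=\|u\|^2\}$ and that $C$ is close to the unit cap $\{(\tilde u,\|\tilde u\|^2):\|\tilde u\|\le 1\}$ on $\bd K^{\ast}$. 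The functions $\bey(\cdot,\Delta)$ and the inclusion in \eqref{eq:simplex-cone-affine} are affinely invariant, and the anisotropic scaling changes $(d-1)$-Hausdorff measure on $\bd K$ near $y$ by a single factor (to leading order as $h\to 0$), so proportions are preserved and I may work entirely in the normalized picture, absorbing affine factors into the constant $c$.

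The reference configuration is explicit. Fix a small $\delta>0$, let $u_1,\dots,u_d\in\R^{d-1}$ be the vertices of a regular $(d-1)$-simplex centered at $0$ with circumradius $r=1/(d-1+\delta)$, set $z_i=(u_i,\|u_i\|^2)$ for $i=1,\dots,d$, and put $z_0=(\varepsilon e_1,\varepsilon^2)$ for some small generic $\varepsilon>0$. The crucial geometric fact is that the cone from the origin through $\Delta_0:=[z_0,\dots,z_d]$ meets the base hyperplane $\{x_d=1\}$ in the convex hull of the central projections $\tilde u_i:=u_i/\|u_i\|^2$. For $i\ge 1$ the points $\tilde u_1,\dots,\tilde u_d$ form a regular $(d-1)$-simplex centered at $0$ of circumradius $d-1+\delta$, hence of inradius $1+\delta/(d-1)>1$; the hull therefore strictly contains the unit ball, which is the base $\bd K^{\ast}\cap \{x_d=1\}$ of the unit cap. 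Hence \eqref{eq:simplex-cone-affine} holds strictly for $\Delta_0$ on $\bd K^\ast$ and persists for $\bd K$ provided $h_0$ is small. Defining $C^i$ as the intersection of $C$ with a normalized geodesic ball around $z_i$ of fixed small radius $\tilde\rho$, the sets $C^0,\dots,C^d$ are pairwise disjoint and \eqref{eq:area-smallcaps} follows from Lemma~\ref{lem:cap-convex}.

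For \eqref{eq:area-types} I introduce local edge coordinates at $z_0$: points near $z_0$ are written as $p=z_0+\sum_{i=1}^d t_ie_i$ with $e_i=z_i-z_0$. A direct check shows that the facet of $\Delta_0$ opposite $z_i$ has affine span $\{t_i=0\}$ with outer side $\{t_i<0\}$, while the facet opposite $z_0$ lies on $\{\sum_i t_i=1\}$, far from $z_0$. Hence for $p$ in a small enough neighborhood of $z_0$ one has $\bey(p,\Delta_0)=|\{i:t_i<0\}|$. The tangent hyperplane to $\bd K^\ast$ at $z_0$ has equation $\sum_i\alpha_it_i=0$ with $\alpha_i=\langle u_{z_0},e_i\rangle$, and the short computation $\alpha_i\propto 2\varepsilon(u_i)_1-\varepsilon^2-\|u_i\|^2$ shows that for small $\varepsilon$ all $\alpha_i$ are nonzero and of the same sign. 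Consequently every sign vector in $\{\pm 1\}^d\setminus\{(+,\dots,+),(-,\dots,-)\}$ is realized on an open cone in the tangent hyperplane, contributing an open region of $\Delta_0(j)$ for each $j=1,\dots,d-1$. Taking $\tilde C^0$ as the intersection of $C$ with a concentric normalized ball around $z_0$ of fixed radius $\tilde\rho_0>\tilde\rho$, the tangent-plane approximation combined with Lemma~\ref{lem:cap-convex} yields $\cH^{d-1}(\Delta_0(j)\cap\tilde C^0)\gtrsim \cH^{d-1}(\tilde C^0)$ for each $j$.

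The final step is to transfer these properties from $\Delta_0$ to $\Delta_x$ for arbitrary $x_i\in C^i$. The edge directions, the facet hyperplanes of $\Delta_x$, the coefficients $\alpha_i^x=\langle u_{x_0},x_i-x_0\rangle$, and the inradius of the projective image of $\Delta_x$ on $\{x_d=1\}$ all depend continuously on $(x_0,\dots,x_d)$ and on the $C^2$-distance between $\bd K$ and $\bd K^\ast$. Each of the three needed properties (strict containment of the unit ball by the projective image of $\Delta_x$, $|\alpha_i^x|\ge c>0$, and proportional $(d-1)$-measure of each of the $2^d-2$ nontrivial sign-pattern regions within $\tilde C^0$) is an open condition, so for $\tilde\rho,\tilde\rho_0$ fixed and small and for $h_0$ small, they persist uniformly over $(x_0,\dots,x_d)\in C^0\times\dots\times C^d$ and $h\le h_0$. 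The main obstacle, and the step requiring the most care, is to make this perturbation argument genuinely uniform in $h$: one must track how the $C^2$-error between $\bd K$ and its osculating paraboloid on the scale $\sqrt h$ interacts with the fixed-scale combinatorial construction, so that the final constant $c$ in \eqref{eq:area-smallcaps} and \eqref{eq:area-types} depends only on $K$ and not on $h$ or on the particular choice of $x_i\in C^i$.
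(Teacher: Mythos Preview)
Your approach is essentially the same as the paper's: normalize to the osculating paraboloid by an affine map, fix a reference simplex inscribed in the unit paraboloid cap, verify the three properties for that reference, and then pass to arbitrary $x_i\in C^i$ and to $\bd K$ by a continuity/uniform-$C^2$-approximation argument. The paper places the apex at $v_0=0$ with the base at height $(3(d-1))^{-2}$ and cites \cite{TuWe} for the sets $\tilde C^i$ satisfying \eqref{eq:area-smallcaps} and \eqref{eq:simplex-cone-affine}, whereas you give a self-contained argument for \eqref{eq:simplex-cone-affine} via central projection to $\{x_d=1\}$; your explicit sign-pattern computation (all $\alpha_i$ nonzero and of the same sign, hence every mixed sign vector is realized on the tangent hyperplane) makes precise the step the paper phrases as ``Therefore, we find $r_0$ and centers $z_j\in C^{\partial E}(0,1)\cap B(0,\delta/2)$ with $B(z_j,2r_0)\subset S_0(j)$''. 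These are differences in level of detail rather than strategy; your perturbation $z_0=(\varepsilon e_1,\varepsilon^2)$ is not needed (the same $\alpha_i$-calculation gives $\alpha_i=-r^2\neq 0$ at $z_0=0$), but it does no harm.
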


Before we prove Proposition~\ref{pro:lower-main}, let us show how it implies the lower bound on the variance.

\begin{proof}[Proof of Theorem~\ref{thm:var}]
	According to Lemma~\ref{lem:covering} choose pairwise disjoint caps $C_j:=C^{\partial K}(y_j,h_n)$ with centers $y_1,\dots,y_n\in \bd K$, height $h_n$ of order $n^{-\frac{2}{d-1}}$ and surface area of order $n^{-1}$. Assume that $n$ is sufficiently large such that $h_n\le h_0$ for $h_0$ as in Proposition~\ref{pro:lower-main}. In each cap $C_j$, we obtain sets $\tilde{C}^0_j,C^0_j,\dots,C^d_j\subset C_j$, each occupying at least a constant fraction of $C_j$ in terms of surface area,  and for every choice of $x_0\in C^0_j,\dots, x_d\in C^d_j$, it holds that, for $k=1,\dots,d-1$,
\[
\cH^{d-1}(\{x\in \tilde{C}_j^0\colon \bey(x,[x_0,\dots,x_d])=k\})
\ge c \,\cH^{d-1}(\tilde{C}_j^0),
\]
where we recall that $c>0$ is a constant depending only on $K$ which may change from line to line in the remainder of the proof.  Thus, if $Y$ is uniformly distributed on $\tilde{C}_j^0$, then the random polytope $[Y,x_0,\dots,x_d]$ can have type $\cT^d_1,\dots,\cT^d_{\lfloor \frac{d}{2}\rfloor}$, each with positive probability. In particular, by \eqref{eq:types},
\[
\V_Y f_k([Y,x_0,\dots,x_{d}])\ge c>0,\qquad k=1,\dots,d-1.
\]

Consider the random points $X_1,\dots,X_n$ and their convex hull $P_n=[X_1,\dots,X_n]$ as in the statement of Theorem~\ref{thm:var}. For each $j=1,\dots,n$, let $A_j$ be the event that exactly one of these random points is contained in each $C^i_j$, $i=0,\dots,d$, one additional random point in $\tilde{C}_j^0$, and no other of the random points in $C_j$. Then
\begin{align*}
\IP(A_j)
&=\frac{n!}{(n-d-2)!}\IP(X_i\in C^{i}_j,0\le i\le d)\times\IP(X_{d+1}\in \tilde{C}_j^0)\times\IP(X_{\ell}\not\in C_j,\ell\ge d+2)\\
&=\frac{n!}{(n-d-2)!}\prod_{i=0}^d\cH^{d-1}(C^i_j)\times \cH^{d-1}(\tilde{C}_j^0)\times(1-\cH^{d-1}(C_j))^{n-d-2},
\end{align*}
and it follows from the estimates in Proposition~\ref{pro:lower-main} that $\IP(A_j)\ge c>0$.  Denote by $I(A_j)$ the indicator function of $A_j$. Then
\[
\IE\sum_{i=1}^{n}I(A_j)
= \sum_{i=1}^{n}\IP(A_j)
\ge c n.
\]
Further, denote by $\cF$ the position of all $X_1,\dots,X_n$ except those which are contained in caps $C_j$ with $I(A_j)=1$. Then, for $k=1,\dots,d-1$, 
\begin{align} \label{eq:var-lower}
\V f_k(P_n)
&= \IE\big[\V[f_k(P_n)|\cF]\big]+ \V\big[\IE[f_k(P_n)|\cF]\big]\notag\\
&\ge \IE\big[\V[f_k(P_n)|\cF]\big].
\end{align}
Assume that $I(A_i)=I(A_j)=1$ for some $i,j\in \{1,\dots,n\}$ and without loss of generality that $X_i$, respectively $X_j$, is one of the at most two points in $C^0_i$, respectively $C^0_j$. By \eqref{eq:simplex-cone-affine}, see \cite{Reitz05}, there is no edge between $X_i$ and $X_j$. Hence, moving $X_i$ does not change the number of faces of $P_n$ incident with $X_j$, and vice versa. This independence structure of $P_n$ implies, for every $k=1,\dots,d-1$,
\[
\IE\big[\V[f_k(P_n)|\cF]\big]
=\IE\sum_{I(A_j)=1}\V_{X_j}f_k(P_n)
\ge C \IE\sum_{I(A_j)=1}I(A_j)
\ge c n,
\]
which together with \eqref{eq:var-lower} completes the proof of Theorem~\ref{thm:var}.
\end{proof}

It remains to prove Proposition~\ref{pro:lower-main}. To this end, we shall use ideas from the proof of \cite[Theorem~1]{TuWe} which follows \cite{Reitz05,RVW08}. In particular, it follows from \cite[Section~3.1]{TuWe} that on any cap $C=C^K(y,h)$ with $y\in \partial K$ and $h$ small enough there exist pairwise disjoint sets $\tilde C^0,\dots,\tilde C^d$ on $C$ such that \eqref{eq:area-smallcaps} holds and for any choice of $x_0\in \tilde C^0,\dots,x_d\in\tilde C^d$ the inclusion \eqref{eq:simplex-cone-affine} holds. Thus, the main part of the proof of Proposition~\ref{pro:lower-main} is to show that also \eqref{eq:area-types} holds after suitably shrinking the above sets $\tilde C^i$. The idea is to fix $\tilde{C}^0=C^0$ and then subsequently choose $C^i \subset \tilde C^i$ proportionally small enough to satisfy \eqref{eq:area-types}.

\begin{proof}[Proof of Proposition~\ref{pro:lower-main}]
Let $E$ be the standard paraboloid in $\IR^d$,
\[
E=\bigg\{x\in \IR^d\colon  \langle x,e_d\rangle \ge \sum_{j=1}^{d-1}\langle x,e_j\rangle^2\bigg\}.
\]
Inscribe a $d$-simplex $S_0=[v_0,\dots,v_d]$ into $\partial E$ by choosing as its base a regular $(d-1)$-simplex with vertices in $\partial E\cap H(e_d,(3(d-1))^{-2})$ and the origin as its apex. 

Let $T_y(K)$ be the tangent space (or supporting hyperplane) of $K$ at the point $y$. It can be identified with $\IR^{d-1}$ having $y$ as its origin. Since $K\in \cC^2_+$, there is a paraboloid $Q_y$ osculating $\bd K$ at $y$ and for any $h>0$ there is a unique affine map $A_y$ such that $A_y(C^{E}(0,1))=C^{Q_y}(y,h)$, while mapping the coordinate axes onto the coordinate axes of $T_y(K)\times \IR$. Moreover, for any $\varepsilon>0$, there is $h_0>0$ such that the mapped cap 
\[
D:=A_y^{-1}(C^{K}(y,h))
\]
has Hausdorff distance at most $\varepsilon$ from the cap $C^{E}(0,1)$, uniformly in $y\in \partial K$ and $h\le h_0$.

As in \cite{TuWe} we define sets $\tilde C^0,\dots,\tilde C^d\subset C^{\partial K}(y,h)$ such that the statements \eqref{eq:area-smallcaps} and \eqref{eq:simplex-cone-affine} hold for any choice of $x_0\in \tilde C^0,\dots,x_d\in \tilde C^d$. Further, for every $\eta>0$, we define sets $C^i, D^i$ by
\begin{equation} \label{eq:eta}
D^i:=A_y^{-1}(\tilde C^i) \cap B(v_i,\eta)
,\
C^i:=A_y(D^i) \subset \tilde C^i,\qquad i=0,\dots,d.
\end{equation}
Given $\tilde{C}^0$ as above, we now prove that \eqref{eq:area-types} holds for sufficiently small $\eta$ depending on $\tilde{C}^0$. It can be deduced from the construction in \cite{TuWe} that there is $\delta>0$ only depending on $K$ such that
\begin{equation} \label{eq:ball-C0}
A_y^{-1}\big(C^{\partial K}(y,h)\big)\cap B(0,2\delta)\subset A_{y}^{-1}(\tilde{C}^0).
\end{equation}

Each of the open polyhedral sets 
\[
S_0(j)=\{z\in \IR^d\setminus S_0\colon \bey(z,S_0)=j\},\qquad j=1,\dots,d-1,
\]
is the union of intersections of affine half-spaces bounded by the affine spans of the facets of $S_0$. Therefore, we find $r_0\in (0,\frac{\delta}{2})$ and centers $z_j\in C^{\partial E}(0,1)\cap B(0,\frac{\delta}{2})$ such that, for any $j=1,\dots,d-1$,
\begin{equation} \label{eq:2r}
B(z_j,2r_0)\subset S_0(j).
\end{equation}
By continuity, we can preserve this property under perturbations of the vertices of $S_0$. More precisely, let $x_0'\in D^0,\dots,x_d'\in D^d$ and consider the simplex $ \Delta_{x'}=[x_0',\dots,x_d']$, which is close in Hausdorff distance to $S_0$ for small $\eta>0$ as in \eqref{eq:eta}. We claim that for $\eta$ small enough it holds that, for all $x_0'\in D^0,\dots,x_d'\in D^d$ and $j=1,\dots,d-1$,
\begin{equation} \label{eq:balls-region}
	B(z_j,r_0)\subset \Delta_{x'}(j)=\{z\in \IR^d\setminus \Delta_{x'}\colon  \bey(z,\Delta_{x'})=j\},
\end{equation}
where $r_0$ is as in \eqref{eq:2r}. To see this, choose some ball $B$ containing a neighborhood of $C^{\partial E}(0,1)$.
Since $S_0$ is a $d$-simplex, the unit normals $n_1,\dots,n_{d+1}$ of the faces of $\Delta_{x'}$ are continuous functions of $x_0',\dots,x_d'$ in a small enough neighborhood of $v_0,\dots,v_d$. More precisely, continuity holds as long as $ \Delta_{x'}=[x_0',\dots,x_d']$ is not degenerate. Moreover, each of the regions $\Delta_{x'}(j)\cap B$ is a continuous function (with respect to Hausdorff distance) of the unit normals $n_1,\dots,n_{d+1}$ in a neighborhood of the normals of $S_0$. Thus, it is possible to choose $\eta>0$ small enough such that \eqref{eq:balls-region} holds for any choice of $x_0'\in B(v_0,\eta),\dots,x_d'\in B(v_d,\eta)$. Finally, we fix $\eta$ and thus the regions $C^0,\dots,C^d$ such that the inclusion \eqref{eq:balls-region} holds.

In particular, it follows from \eqref{eq:balls-region} that, for any choice of $x_0'\in D^0,\dots,x_d'\in D^d$, the ball $B(z_j,r_0)\subset B(0,\delta)$ is contained in $\Delta_{x'}(j)$ and 
\[
\cH^{d-1}(\Delta_{x'}(j)\cap C^{\partial E}(0,1)\cap B(0,\delta))\ge 2c_0,\qquad j=1,\dots,d-1,
\]
where $c_0>0$ depends only on $r_0$. By the approximation above, we can choose $h_0>0$ small enough such that $D$ is sufficiently close to $C^{E}(0,1)$ such that, by \eqref{eq:ball-C0}, 
\begin{equation} \label{eq:region-lower}
\cH^{d-1}\big(\Delta_{x'}(j)\cap A_y^{-1}(\tilde{C}^0)\big)
\ge c_0,\qquad j=1,\dots,d-1,
\end{equation}
uniformly in $y\in \partial K$ and $h\le h_0$. 

Finally, for the proof of \eqref{eq:area-types} choose $x_i\in C^i$ with $x_i'=A_y^{-1}(x_i)\in D^i$. Let $F$ be a face of $\Delta_{x'}$. Then $A_y(F)$ is a face of $\Delta_{x}$ and the map $A_y$ satisfies, for any $z\in A_y^{-1}(C^{\partial K}(y,h))$,
\[
z \text{ is beyond } F \qquad \text{if and only if}\qquad A_y(z) \text{ is beyond }A_y(F).
\]
This gives
\[
\Delta_{x}(j)\cap \tilde{C}^{0}
=A_y\big(\Delta_{x'}(j)\cap A_y^{-1}(\tilde{C}^{0})\big),
\]
and it follows from \eqref{eq:region-lower} that, for some $c>0$,
\begin{equation} \label{eq:regions-lower-affine}
	\cH^{d-1}\big(\Delta_{x}(j)\cap \tilde{C}^{0}\big)
\ge c h^{\frac{d-1}{2}},
\end{equation}
uniformly in $y\in \partial K$ and $h\le h_0$. In the last step we used that due to the curvature bounds on $\partial K$, $ K \in \cC^2_+$, the affine transformation $A_y$ transforms surface areas by multiplication of a factor $h^{\frac{d-1}{2}}$ up to constants depending only on $K$, see \cite[Section~3.1]{TuWe} and the proof of \cite[Lemma~6.2]{RVW08} for details. Combined with Lemma~\ref{lem:cap-convex}, the estimate \eqref{eq:regions-lower-affine} yields \eqref{eq:area-types}. This completes the proof of Proposition~\ref{pro:lower-main}.
\end{proof}

\section{Proof of the central limit theorem}\label{sec:clt}

In this section we prove Theorem~\ref{thm:CLT} using the machinery of normal approximation for stabilizing functionals developed in \cite{LachSchulteYukich}. Let $K\in \cC^2_+$ and $P_n=[X_1,\dots,X_n]$ be as in Theorem~\ref{thm:CLT}. Assume without loss of generality that $\cH^{d-1}(\bd K)=1$. Moreover, assume that $n\ge 9$ since if $d+2\le n< 9$, then the statement of Theorem~\ref{thm:CLT} follows directly from the fact that $\V f_k(P_n)>0$ and $d_K\le 1$.

We can represent the number of $k$-faces of the random polytope $P_n$ as a sum of local contributions as follows. Almost surely, the random polytope $P_n$ is simplicial and 
\begin{equation} \label{eq:score}
f_k(P_n)=\sum_{i=1}^n\xi_k(X_i,\cX_n),\qquad k=1,\dots,d-1,
\end{equation}
where $\cX_n=\{X_1,\dots,X_n\}$ is a random point set and, for any finite $\cX\subset \bd K$ and $x\in \cX$, 
\begin{equation*} 
\xi_k(x,\cX)=\frac{1}{k+1}\sum_{F\in \cF_k([\cX])} \mathbf{1}_{x\in F},\quad x\in \cX,\qquad k=1,\dots,d-1.
\end{equation*}
Only vertices $y\in \cX$ sharing a $k$-face with $x$ contribute to the value of $\xi_k(x,\cX)$ and these vertices are usually close to $x$. A radius of stabilization for $\xi_k$ is a radius $R(x,\cX)$ depending on $x$ and $\cX$ such that points of $\cX$ with distance larger than $R(x,\cX)$ do not contribute to the value of $\xi_k(x,\cX)$. Formally define, for a finite set $\cX\subset \bd K$ and $x\in \bd K$,
\begin{align} \label{eq:rad-stab}
R(x,\cX\cup \{x\})=\inf\{R>0\colon  &K\cap H_F^+\subset B(x,R) \text{ for all }\notag\\
&F\in \cF_{d-1}([\cX,x]) \text{ with }x\in F\},
\end{align}
which is the smallest radius $R$ such that the ball $B(x,R)$ contains all facets of $[\cX,x]$ which arise from adding $x$. Equivalently, it is the distance of the farthest point from $x$ in any such facet. Then $R$ is a radius of stabilization for $\xi_k$, i.e. 
\[
\xi_k(x,\cX\cup\{x\})
=\xi_k(x,\cX\cap B(x,R(x,\cX))), \qquad k=1,\dots,d-1.
\]

The following theorem shows that $R$ as given in \eqref{eq:rad-stab} satisfies an exponential deviation bound in case of binomial input (the case of Poisson input is very similar and discussed in Section~\ref{sec:poisson}).

\begin{theorem}\label{th:rad_influence}
There exist $c,C>0$ such that for each $x \in \bd K$ and $r>0$ we have
$$
\P( R(x, \cX_{n} \cup\{x\}) \geq r)
\leq
C \exp(- c r^{d-1} n),\qquad n\in \IN. 
$$
\end{theorem}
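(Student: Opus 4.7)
Proof plan. The aim is to bound $\IP(R \geq r)$ by the first moment of the number of facets of $[\cX_n \cup \{x\}]$ incident to $x$ whose outer cap reaches distance at least $r$ from $x$, and then to evaluate this expectation via the Blaschke--Petkantschin formula for points on the hypersurface $\partial K$. The key payoff, spelled out below, is an exact cancellation of $\theta$-powers in the B--P Jacobian that allows the combinatorial factor $\binom{n-1}{d-1}$ to be absorbed into the exponential tail without leaving a spurious $r^{-(d-1)}$ prefactor.

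First, I would translate the event $\{R(x, \cX_n \cup \{x\}) \geq r\}$ into the existence of a facet $F$ of $[\cX_n \cup\{x\}]$ with $x \in F$ and $K \cap H_F^+ \not\subset B(x, r)$. Writing $u_F$ for the outward normal of $H_F^+$ and $\theta := \angle(u_F, u_x)$ for its tilt from the outward normal at $x$, the $\cC^2_+$ assumption, Blaschke's rolling theorem \eqref{eq:in-out} and Lemma~\ref{lem:cap-convex} force $\theta \geq c r$ and give $|C(u_F)| := \cH^{d-1}(\partial K \cap H_F^+) \geq c'\theta^{d-1}$. Since the interior of the cap is disjoint from $\cX_n$, Markov's inequality combined with a sum over ordered $(d-1)$-tuples of the remaining vertices yields
$$
\IP(R \geq r) \leq \binom{n-1}{d-1} \int_{(\partial K)^{d-1}} \mathbf{1}(\theta \geq cr)\,(1-|C|)^{n-d}\,\prod_{i=1}^{d-1} d\sigma(y_i),
$$
where $|C|$ is the surface area of the outer cap determined by the hyperplane through $x, y_1, \ldots, y_{d-1}$.

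Second, I would apply the Blaschke--Petkantschin formula for $(d-1)$ points on $\partial K$ with the hyperplane $H(x, u)$ constrained to pass through $x$:
$$
\prod d\sigma(y_i) = c_d\,\frac{V_{d-1}(x, y_1, \ldots, y_{d-1})}{\prod_i \sin\alpha(u, u_{y_i})}\,du\,\prod d\sigma_{H(x,u) \cap \partial K}(y_i),
$$
where $V_{d-1}$ is the $(d-1)$-simplex volume and $\alpha(u, u_{y_i})$ is the angle between the cutting hyperplane and $T_{y_i}\partial K$. For $y_i$ on the rim of a cap of tilt $\theta$, the $\cC^2_+$ smoothness gives $V_{d-1} \leq C\theta^{d-1}$, $\sin\alpha(u, u_{y_i}) \geq c\theta$, and $\cH^{d-2}(H(x, u) \cap \partial K) \leq C\theta^{d-2}$, so
$$
\int_{(H(x,u) \cap \partial K)^{d-1}} \frac{V_{d-1}}{\prod_i \sin\alpha_i}\prod d\sigma_{H \cap \partial K} \leq C\,\theta^{(d-1)(d-2)}.
$$
Parametrizing $u \in \Sp^{d-1}$ by $(\theta, \omega) \in [0, \pi] \times \Sp^{d-2}$ with $du \asymp \theta^{d-2} d\theta\, d\omega$, using $(1 - |C|)^{n-d} \leq e^{-c'\theta^{d-1} n}$, and substituting $s = c' \theta^{d-1} n$ gives
$$
\int_{cr}^{\pi} \theta^{d^2-2d}\,e^{-c' \theta^{d-1} n}\, d\theta \leq \frac{C}{n^{d-1}} \int_{c_0 r^{d-1} n}^\infty s^{d-2} e^{-s}\, ds \leq C\,\frac{(r^{d-1} n)^{d-2}}{n^{d-1}}\, e^{-c_0 r^{d-1} n}.
$$
Multiplying by $\binom{n-1}{d-1} \leq n^{d-1}/(d-1)!$ yields $\IP(R \geq r) \leq C\, t^{d-2}\, e^{-c_0 t}$ with $t = r^{d-1} n$, and since $t \mapsto t^{d-2} e^{-c_0 t/2}$ is bounded on $[0, \infty)$ we conclude $\IP(R \geq r) \leq C'\, e^{-c_0 r^{d-1} n / 2}$.

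The main obstacle is to establish the Blaschke--Petkantschin formula on the curved boundary together with the uniform estimate $\sin\alpha(u, u_{y_i}) \geq c\theta$, which uses the $\cC^1$-regularity of the Gauss map on $\partial K$ via the Weingarten operator. Structurally, the crux is the exact cancellation $V_{d-1}/\prod \sin\alpha_i \asymp 1$ at the $\theta^{d-1}$ scale: without it a $r^{-(d-1)}$ prefactor would arise (as in naive covering-based union-bound arguments) and could not be absorbed into the exponential uniformly in $n$.
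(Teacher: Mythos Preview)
Your approach is genuinely different from the paper's and, as a strategy, it is sound.  The paper avoids integration entirely: it proves a short geometric lemma (Lemma~\ref{le:points_cover}) showing that for each $x\in\partial K$ one can place $2^{d-1}$ fixed points $z_1,\dots,z_{2^{d-1}}\in\partial K$ so that \emph{every} cap cut off by a hyperplane through $x$ of diameter $\ge r$ must contain one of the balls $B(z_i,c\,r)$.  Then
\[
\IP(R\ge r)\le \sum_{i=1}^{2^{d-1}}\IP\big(B(z_i,c\,r)\cap\cX_n=\emptyset\big)\le 2^{d-1}(1-c'r^{d-1})^n,
\]
which is the claimed bound.  No Blaschke--Petkantschin formula, no tracking of $\theta$-powers, no binomial prefactor to absorb.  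By contrast, you run a first-moment bound over ordered $(d-1)$-tuples, apply a pivoted surface B--P formula, and rely on the cancellation $V_{d-1}/\prod_i\sin\alpha_i\asymp 1$ at scale $\theta^{d-1}$ to kill the factor $\binom{n}{d-1}$.  Your route buys a framework that could be pushed to sharper tail asymptotics or to non-uniform densities; the paper's route buys a two-page, essentially elementary proof using only rolling balls.

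Two technical points you should firm up.  First, the ``pivoted'' B--P identity you write is not the standard affine B--P and needs a clean derivation; the Jacobian structure $V_{d-1}\,du\,\prod d\sigma_{H\cap\partial K}/\prod\sin\alpha_i$ is correct at the level of scaling but deserves a reference or a proof.  Second, the crucial lower bound $\sin\alpha_i\ge c\theta$ does hold uniformly over $y_i\in H(x,u)\cap\partial K$, but the cleanest justification is via rolling balls rather than the Weingarten map: $H\cap K$ contains the disk $H\cap B(x-r_{in}u_x,r_{in})$ of radius $r_{in}\sin\theta$ and is contained in the disk $H\cap B(y_i-r_{out}u_{y_i},r_{out})$ of radius $r_{out}\sin\angle(u,u_{y_i})$, whence $\sin\angle(u,u_{y_i})\ge (r_{in}/r_{out})\sin\theta$.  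With these two points secured, your argument goes through as written.
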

\begin{proof}
	Recall Blaschke's rolling theorem in the form \eqref{eq:in-out} with a ball of radius $r_{in}$ rolling inside and a ball of radius $r_{out}$ rolling outside. Since $R\le {\rm diam}(K)$, it is sufficient to show the statement of Theorem~\ref{th:rad_influence} for $r\le r_{in}$.

	Let $x\in \bd K$ and $r\le r_{in}$. If $R(x, \cX_n\cup \{x\}) \geq r$, then there is a hyperplane $H_F$ containing a facet $F \in \cF_{d-1}([\cX_n,x])$ with $x$ as one of its vertices such that the diameter of $H_F \cap K$ exceeds $r$. Thus, also the diameter of $H_F \cap B(x-r_{out} u_x, r_{out})$ exceeds $r$. Hence the diameter of the ball $H_F \cap B(x-r_{in} u_x, r_{in})$ exceeds $\frac{r_{in}}{r_{out}} r$, and its radius exceeds $\frac{r_{in}}{2r_{out}} r$. 
Because of
$$ x \in H_F \cap B(x-r_{in} u_x, r_{in}) \subset H_F \cap K , $$
there is a ball with center $\tilde x \in \bd K$, of radius $\frac{r_{in}}{2r_{out}} r$, which contains $x$ in its boundary, such that 
\begin{equation}\label{eq:cap_ball}
B\left(\tilde x, \frac{r_{in}}{2r_{out}} r \right) \cap \bd K \subset H_F^+ \cap \bd K 
.
\end{equation}
We use the following geometric lemma.
\begin{lemma}\label{le:points_cover}
For any $x  \in \bd K$ there are $z_1, \dots , z_{2^{d-1}} \in \bd K$ such that the following holds:
if $\tilde x \in \bd K$ with $\|\tilde x -  x\| = \rho < r_{in}$, then there is an $i \in \{1, \dots , 2^{d-1}\}$ such that 
$$ B\left(z_i, \frac 1{4d^2} \rho \right) \subset {\rm int}\, B(\tilde x, \rho) . $$ 
\end{lemma}
\begin{proof}
Without loss of generality assume that $x$ is the origin, and that the outer unit normal vector of $x$ equals $-e_d$. The orthogonal hyperplanes to the coordinate vectors $e_1, \dots, e_{d}$ dissect $\R^d$ into $2^d$ orthants from which those which contain $e_d$ may have non-empty intersection with the interior of $K$, say $O_1, \dots,  O_{2^{d-1}}$.
Assume that $O_1 = \{x \in \R^d \colon x_1, \dots, x_d \geq 0 \}$, and that $\tilde x \in O_1$. 

We define
$$z_1 = \left(\frac \r {\sqrt {d(d-1)}}, \dots, \frac \rho {\sqrt {d(d-1)}} , h_d \right)    , $$
where $h_d\ge 0$ is chosen such that $z_1 \in \bd K $ (and $z_2, \dots , z_{2^{d-1}}$ analogously).
Because $\tilde x$ and $z_1$ are not in the interior of the ball with radius $r_{in}$, we have
$$
h_d \leq \frac {\r^2}{d r_{in}} \leq \frac {\r}{d }
, \text{ and }\ 
\tilde x_d \leq \frac {\rho^2}{2 r_{in}} \leq \frac {\rho}{2} 
$$
We estimate the distance between $z_1$ and $\tilde x$.
\begin{align*}
\| \tilde x -z_1  \|^2
&=
\| \tilde x \| ^2  -  2 \tilde x \cdot  z_{1}  +  \|z_{1}\|^2
\\ & \leq 
\rho^2   -   \frac{2\rho}{\sqrt{d(d-1)}} \sum_{i=1}^d \tilde x_i + \frac{2 \rho}{\sqrt{d(d-1)}} \tilde x_d - 2 h_d \tilde x_d  +  \rho^2 \frac 1d  + h_d^2
\\ & \leq
\rho^2 \left(1 + \frac 1 d - \frac{2}{\sqrt{d(d-1)}}\right) + \frac{2\rho }{\sqrt{d(d-1)}} \tilde x_d - 2 \tilde x_d h_d  + h_d^2
\end{align*}
because $\sum _1^d \tilde x_i \geq \| \tilde x \|_2 = \rho$ for $\tilde x_i \geq 0$.
The function $ \frac{2\rho }{\sqrt{d(d-1)}} \tilde x_d - 2 \tilde x_d h_d  + h_d^2 $ has no maximum in the interior of the region 
$(\tilde x_d, h_d) \in[0, \frac \rho 2] \times[0, \frac \rho d]$, and attains for $\tilde x_d= \frac \rho 2$, $h_d = 0$ its maximum $\frac{\rho ^2}{\sqrt{d(d-1)}}$. Hence
\begin{align*}
\| \tilde x -z_1  \|^2
& \leq
\rho^2 \left(1 + \frac 1 d - \frac{1}{\sqrt{d(d-1)}} \right)\\
& \leq  
\rho^2 \left(1 - \frac{1}{2d^2} \right)
\end{align*}
since $(1-\frac{1}{d})^{-1/2}\ge 1+\frac{1}{2d}$.
This proves that the ball with center $z_1$ of radius
$$
\rho- \sqrt{ \rho^2 - \frac{\rho^2}{2d^2}  } > \frac 1{4d^2} \rho
$$
is contained in the interior of $B(\tilde x, \rho)$. This completes the proof of Lemma~\ref{le:points_cover}.
\end{proof}

From \eqref{eq:cap_ball} and Lemma \ref{le:points_cover} we deduce that for each boundary point $x$ and for any fixed $r$ there are points $z_1, \dots, z_{2^{d-1}}$ such that any cap of diameter at least $r$ contains one of the balls
$$
B \left(z_i, \frac{r_{in}}{8 d^2 r_{out}} r \right),\qquad i=1,\dots,2^{d-1}.
$$
This implies
\begin{align*}
\P( R(x, \cX_{n} \cap\{x\}) \geq r)
& =
\P\big( \exists F \in \cF([x,\cX_{n}])\colon x \in F,\, {\rm diam}( H_F\cap K ) \geq r \big)
\\ & \leq
\sum_{i=1}^{2^{d-1}} \P  \left( B\left(z_i, \frac{r_{in}}{8 d^2 r_{out}} r \right) \cap \cX_{n} = \emptyset \right)
\\ & \leq
\sum_{i=1}^{2^{d-1}} \left(1- \cH^{d-1} \left(B\left(z_i, \frac{r_{in}}{8 d^2 r_{out}} r \right) \cap \bd K\right) \right)^{n}.
\end{align*}

With $c'=(\frac{r_{in}}{8 d^2 r_{out}})^{d-1}c$, where $c$ is from Lemma \ref{lem:cap-convex}, it holds that
\begin{align*}
\P( R(x, \cX_{n} \cap\{x\}) \geq r)
& \leq
2^{d-1} \left(1- c' r^{d-1} \right)^{n} 
\\ & \leq
2^{d-1} \exp(- c' r^{d-1} n).
\end{align*}
This completes the proof of Theorem~\ref{th:rad_influence}.
\end{proof}

It follows from Theorem~\ref{th:rad_influence} that the scores $\xi_k$ have bounded moments in the sense of \cite{LachSchulteYukich}.

\begin{proposition}\label{pro:moments}
Let $k=0,\dots,d-1$ and $q\in (0,\infty)$. There exists $C_q>0$ such that for any $\cY\subset \bd K$ with $|\cY|\le 7$, $x\in \partial K$ and $n\ge 9$ we have
\[
\IE\, \xi_k(x,\cX_{n-8}\cup \{x\}\cup \cY)^{q} \le C_q.
\]
\end{proposition}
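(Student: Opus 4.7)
The plan is to reduce the moment bound to counting the number of points in a ball of stabilization radius around $x$, and then to control this count by combining the exponential tail from Theorem~\ref{th:rad_influence} with standard Binomial moment estimates.

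First, I would localize the score. Let $R:=R(x,\cX_{n-8}\cup\{x\}\cup\cY)$ and let $M$ denote the cardinality of $(\cX_{n-8}\cup\{x\}\cup\cY)\cap B(x,R)$. Since $K$ is strictly convex, every element of $\bd K$ is extreme in any convex hull of its subsets, so $x$ is a vertex of $[\cX_{n-8}\cup\{x\}\cup\cY]$ and every $k$-face containing $x$ lies in some facet $F$ with $x\in F$. By the definition of $R$, such a facet satisfies $F\subset K\cap H_F^+\subset B(x,R)$, so the $k$ remaining vertices of any such $k$-face must be chosen from the $M-1$ points in $((\cX_{n-8}\cup\{x\}\cup\cY)\setminus\{x\})\cap B(x,R)$. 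Hence
$$
\xi_k(x,\cX_{n-8}\cup\{x\}\cup\cY)\le \tfrac{1}{k+1}\binom{M-1}{k}\le C_k M^k,
$$
and since $|\cY\cup\{x\}|\le 8$ we have $M\le 8+Y_R$, where $Y_r:=|\cX_{n-8}\cap B(x,r)|$. It therefore suffices to bound $\IE Y_R^{kq}$ uniformly in $n$ and $\cY$.

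Next, I would observe that Theorem~\ref{th:rad_influence} extends to this perturbed setting with the same exponential rate: on $\{R\ge r\}$ the argument underlying Lemma~\ref{le:points_cover} produces a ball on $\bd K$ of surface area at least $c r^{d-1}$ containing no point of $\cX_{n-8}$, while the deterministic points in $\cY\cup\{x\}$ are irrelevant to the probability estimate. Together with $n-8\ge n/9$ for $n\ge 9$, this yields $\P(R\ge r)\le C\exp(-cr^{d-1}n)$ uniformly in $\cY$.

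Finally, I would estimate $\IE Y_R^{kq}$ by a dyadic decomposition of the stabilization radius. Set $r_j:=(2^j/n)^{1/(d-1)}$ so that $nr_j^{d-1}=2^j$; since $Y_r$ is non-decreasing in $r$,
$$
\IE Y_R^{kq}\le \IE Y_{r_0}^{kq}+\sum_{j=1}^{\infty}\IE\big[Y_{r_j}^{kq}\mathbf{1}_{R\ge r_{j-1}}\big].
$$
By Lemma~\ref{lem:cap-convex}, $Y_{r_j}$ is Binomial with mean at most $C 2^j$, and the standard moment bound $\IE X^m \le C_m(1+\mu^m)$ for $X\sim\mathrm{Bin}(N,p)$ with mean $\mu=Np$ gives $\IE Y_{r_j}^{2kq}\le C_{kq}(1+2^{2jkq})$. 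Combining with the tail $\P(R\ge r_{j-1})\le C\exp(-c 2^{j-1})$ via Cauchy--Schwarz yields
$$
\IE\big[Y_{r_j}^{kq}\mathbf{1}_{R\ge r_{j-1}}\big]\le C_{kq}\,(1+2^{jkq})\exp(-c 2^{j-2}),
$$
which is summable in $j$. The main obstacle is really the second step: one must check that the geometric argument of Theorem~\ref{th:rad_influence} is robust with respect to the seven additional deterministic vertices. This amounts to a minor bookkeeping modification, since only the exclusion of random points from an outer cap ever enters the probability bound.
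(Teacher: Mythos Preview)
Your proposal is correct and follows the same overall strategy as the paper: bound $\xi_k$ polynomially by the number of points falling in the stabilization ball $B(x,R)$, then control that count via the exponential tail of $R$ from Theorem~\ref{th:rad_influence}. The implementation differs in two places. First, to make the tail bound on $R$ insensitive to the extra deterministic set $\cY$, the paper does not re-run the geometric argument; instead it uses that $R(x,\cX)$ is monotone non-increasing when points are added to $\cX$, so $R(x,\cX_{n-8}\cup\{x\}\cup\cY)\le R(x,\cX_{n-8}\cup\{x\})$ and Theorem~\ref{th:rad_influence} applies verbatim with $n-8$ in place of $n$. Second, in place of your dyadic decomposition in $r$ combined with Cauchy--Schwarz, the paper derives exponential tails for the point count $N_0$ directly by splitting the event $\{N_0\ge t\}$ according to whether the surface area $A_n=\cH^{d-1}(B(x,R)\cap\bd K)$ exceeds $t/n$: if $A_n\ge t/n$ then $R^{d-1}\gtrsim t/n$ and the stabilization tail gives $Ce^{-ct}$, while if $A_n\le t/n$ then $N_0$ is dominated by a Binomial count in a region of measure $t/n$, and Bernoulli concentration again gives $Ce^{-ct}$. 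Both executions produce the same uniform moment bounds; the paper's split yields exponential tails for $N_0$ in one step, whereas your version is slightly more hands-on but avoids any appeal to the monotonicity of $R$.
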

\begin{proof}
	Let $k=1,\dots,d-1$, $x\in \partial K$ and $\cY\subset \partial K$ with $|\cY|\le 7$. If a $k$-face of the convex hull $Q_n=[\cX_{n-8},x,\cY]$ contains $x$, then it is a subset of a facet contained in a ball around $x$ with radius equal to the radius of stabilization. Therefore,
	\begin{equation}\label{eq:kface-influence}
	\xi_k(x,\cX_{n-8}\cup \{x\}\cup \cY)
	\le \frac{N_k}{k+1},
	\end{equation}
	where $N_k$ is the number of $k$-faces of $Q_n$ contained in the ball $B(x,R_n)$ with random radius $R_n=R(x,\cX_{n-8}\cup \{x\}\cup \cY)$. Analogously, define $N_0$ as the number of vertices of $Q_n$ which belong to this ball.  Almost surely, the points in $\cX_{n-8}$ are affinely independent and every $k$-face of $[\cX_{n-8},x,\cY]$ has $\ell$ vertices, where $k+1\le \ell\le k+9$. Thus, assuming that $N_0\ge d+9$, almost surely, 
	\begin{equation} \label{eq:binom}
	N_k
	\le \binom{ N_0 }{k+1}+\cdots+\binom{ N_0 }{k+9}
	\le \Big(\frac{e N_0}{k+9}\Big)^{k+9}.
	\end{equation}

	Let $A_n=\cH^{d-1}(B(x,R_n)\cap \partial K)$ and $t>0$. Then
	\begin{equation}\label{eq:tails}
	\IP(N_0\ge t)
	= \IP\Big(N_0\ge t, A_n\ge \frac{t}{n}\Big)
	+ \IP\Big(N_0\ge t, A_n\le \frac{t}{n}\Big).
	\end{equation}
	Moreover, by Lemma \ref{lem:cap-convex}, we have $A_n\le C R_n^{d-1}$. By Theorem~\ref{th:rad_influence} and the fact that $R(x,\cX)$ does not increase if points are added to $\cX$, it holds that
	\begin{align}
	\IP\Big(N_0\ge t, A_n\ge \frac{t}{n}\Big)\notag
	&\le \IP\Big(R_n^{d-1}\ge \frac{t}{C n}\Big)\\ \notag
	&\le \IP\Big(R(x,\cX_{n-8}\cup \{x\})^{d-1}\ge \frac{t}{C n}\Big)\\  
	&\le Ce^{-ct}.\label{eq:tail-1}
	\end{align}
	Choose a deterministic radius $R_{n,t}(x)$ such that 
	\[
	\cH^{d-1}\big(B(x,R_{n,t}(x))\cap \bd K \big)=\frac{t}{n},\qquad t\le n.
	\]
	(Recall that we assume $\cH^{d-1}(\partial K)=1$.)
	If $N_0\ge t>9$ and $A_n\le \frac{t}{n}$, then the ball $B(x,R_{n,t}(x))$ contains at least $t$ points of $\cX_{n-8}\cup \{x\}\cup \cY$ and thus at least $t-8$ points of $\cX_{n-8}$. Therefore, for $t>9$,
	\begin{equation}\label{eq:tail-2}
	\IP\Big(N_0\ge t, A_n\le \frac{t}{n}\Big)
	\le \IP\big(|\cX_{n-8}\cap B(x,R_{n,t}(x))|\ge t-8\big)
	\le Ce^{-ct},
	\end{equation}
	by concentration bounds for sums of independent Bernoulli random variables. If $t>n$, then the bound trivially holds. Combining \eqref{eq:tails}, \eqref{eq:tail-1} and \eqref{eq:tail-2}, the tails of $N_0$ decay exponentially, uniformly in $n,x$ and $Y$. Thus, by \eqref{eq:kface-influence} and \eqref{eq:binom}, the tails of $\xi_k(x,\cX_{n-8}\cup \{x\}\cup \cY)$ decay sufficiently fast for the required moment bounds to hold. This completes the proof of Proposition~\ref{pro:moments}.
\end{proof}

\begin{proof}[Proof of Theorem~\ref{thm:CLT}]
	Let $k\in \{1,\dots,d-1\}$. By \eqref{eq:score}, it is sufficient to prove the statement of Theorem~\ref{thm:CLT} for 
\begin{equation} \label{eq:Hn}
H_n'
=\sum_{i=1}^{n}\xi_k(X_i,\cX_n)
\end{equation}
as in \eqref{eq:score}. By Theorem~\ref{thm:var} it holds that
\begin{equation} \label{eq:H-var}
\V H_n'
=\V f_k(P_n)
\ge c n.
\end{equation}

In the setting of \cite{LachSchulteYukich} consider the measure space $(\partial K,\cB\cap \bd K,\cH^{d-1})$ equipped with the Euclidean metric, where $\cB$ is the Borel $\sigma$-algebra on $\IR^d$. By Lemma~\ref{lem:surface-balls}, the condition in \cite[eq.~(2.1)]{LachSchulteYukich} is satisfied with $\gamma=d-1$. 

We verify the assumptions of \cite[Corollary 2.2(b)]{LachSchulteYukich} for the choice of $H_n'$ as in \eqref{eq:Hn}. By Theorem~\ref{th:rad_influence} the functionals $\xi_k$ are exponentially stabilizing in the sense of \cite{LachSchulteYukich}. By Proposition~\ref{pro:moments} they satisfy the moment condition in \cite[Theorem~2.1(b)]{LachSchulteYukich} for any $p\in (0,1]$. The assumption of exponential decay with respect to a set is trivially satisfied with $I_{K,n}=n$ as in \cite[eq.~(2.13)]{LachSchulteYukich}. Finally, from the lower bound in \eqref{eq:H-var} we deduce that $\sup_{n\ge 1}I_{K,n}/\V H_n' \le c^{-1}$. Thus, all the assumptions of \cite[Corollary 2.2(b)]{LachSchulteYukich} are satisfied and Theorem~\ref{thm:CLT} follows. 
\end{proof}
\section{Poisson Boundary Polytopes}\label{sec:poisson}

In this section we prove Theorem~\ref{thm:poisson}. Let $N \sim \pi(t)$ be Poisson distributed with parameter $t>1$. Choose $N$ independent random points uniformly on $\bd K$, and denote the convex hull of these random points by $P_N$. We call $P_N$ a Poisson boundary polytope. 

First, we prove that the variance lower bound carries over to $\V f_k(P_N)$. Analogously to \eqref{eq:var-lower} and using Theorem \ref{thm:var} we have
$$
\V f_k(P_N) \geq \E[\V ( f_k (P_N)|N)] \geq \E c N = c t.
$$
An upper bound for the variance follows immediately from Theorem 10 in \cite{Re03} and the Poincar\'e inequality for Poisson random variables, which in our context says that
$$
\V f_k(P_N) \leq t \E (f_k(P_{N+1})-f_k(P_N))^2 \leq c t. 
$$
(For the general Poincar\'e inequality for functionals of Poisson random measures see \cite[Theorem 18.7]{LP}.) This proves \eqref{eq:Poisson-Var}.

For the central limit theorem \eqref{eq:Poisson-CLT} observe that the method leading to the central limit theorem in Theorem \ref{thm:CLT} works with some immediate changes also in our case. The radius of stabilization is the same, and the work \cite[Cor.2.2]{LachSchulteYukich} contains both the CLT for binomial input (the case considered in Sections~\ref{sec:var} and \ref{sec:clt}) and the analogous CLT for Poisson input we consider in this section. The only changes consist in changing the binomial distribution to the Poisson distribution at several places which is immediate.

\subsection*{Acknowledgements}

We would like to thank Martina Juhnke-Kubitzke for pointing us to the notion of combinatorial types. Further, we thank the Hausdorff Research Institute for Mathematics, University of Bonn, for providing an
excellent working environment during the Dual Trimester Program \textit{Synergies between modern probability, geometric analysis and stochastic geometry} where this work originated.

The research of MR was funded in part by the Deutsche Forschungsgemeinschaft (DFG, German Research Foundation) -- SPP 2265, project 531562368. The research of MS was funded in whole or in part by the Austrian Science Fund (FWF) [Grant DOI: 10.55776/J4777]. For open access purposes, the authors have applied a CC BY public copyright license to any author-accepted manuscript version arising from this submission. 

\bibliographystyle{abbrv}
\bibliography{cltfaces}

\end{document}